\documentclass{article}
\usepackage{amsmath} 
\usepackage{amsthm} 
\usepackage{mathtools} 
\usepackage{amssymb} 
\usepackage[left=1.5in,right=1.5in,top=1in,bottom=1in]{geometry}
\usepackage{hyperref} 
\usepackage{cleveref} 
\usepackage{mathrsfs} 
\usepackage{tikz-cd} 
\usepackage{tikz} 
\usepackage{import} 
\usepackage{enumitem} 
\usepackage[mathscr]{eucal}

\DeclareMathOperator{\out}{Out}
\DeclareMathOperator{\Mod}{Mod}
\DeclareMathOperator{\st}{st}
\DeclareMathOperator{\pf}{PF}

\newcommand{\pfmin}{\pf_{\operatorname{min}}}

\newtheorem{THM}{Theorem}
\newtheorem{thm}{Theorem}
\numberwithin{thm}{section}
\newtheorem{lem}[thm]{Lemma}

\newtheorem{prop}[thm]{Proposition}
\newtheorem{cor}[thm]{Corollary}
\theoremstyle{definition}
\newtheorem{ex}[thm]{Example}
\newtheorem{rk}[thm]{Remark}

\usetikzlibrary{decorations.markings, shapes, quotes}
\tikzset{->-/.style = {
		very thick,
		decoration = {
			markings,
			mark = at position .5 with {\arrow{>}}
		},
		postaction = {decorate}
	},
	pt/.style = {
		circle,
		fill = black,
		scale = 0.5,
	},
}

\newcommand{%
	    
	        \import{./figures/}{.pdf_tex}
	}[1]{%
	    
	        \import{./figures/}{#1.pdf_tex}
	}

\begin{document}
\title{Train track maps on graphs of groups}
\author{Rylee Alanza Lyman}
\maketitle

\begin{abstract}
    In this paper we develop the theory of train track  maps on graphs of groups.
    Expanding a definition of Bass,
    we define a notion of a map of a graph of groups,
    and of a homotopy equivalence.
    We prove that under one of two technical hypotheses,
    any homotopy equivalence of a graph of groups
    may be represented by a relative train track map.
    The first applies in particular to graphs of groups with finite edge groups,
    while the second applies in particular to certain generalized Baumslag--Solitar groups.
\end{abstract}

A homotopy equivalence $f\colon G \to G$ of a connected graph $G$
is a \emph{train track map}
when $f$ maps vertices to vertices
and the restriction of any iterate of $f$
to an edge of $G$ yields an immersion.
(Relative) train track maps were introduced in \cite{BestvinaHandel};
they are perhaps the main tool for studying outer automorphisms of free groups.

A train track map $f\colon G \to G$ 
induces a well-defined outer automorphism of $\pi_1(G)$, a free group.
The train track condition  simplifies the analysis of the action of $f$
on paths and loops in $G$.
Choosing a basepoint $\star$ in $G$ and a path from $\star$ to $f(\star)$
determines an automorphism $f_\sharp\colon \pi_1(G,\star) \to \pi_1(G,\star)$
and a lift of $f$ to the universal covering tree $\Gamma$ of $G$.
The map $\tilde f\colon \Gamma \to \Gamma$
is \emph{$f_\sharp$-twisted equivariant} in the sense that
for $g \in \pi_1(G,\star)$ and $x \in\Gamma$, we have
\[  \tilde f(g.x) = f_\sharp(g).\tilde f(x). \]
The lift $\tilde f\colon \Gamma \to \Gamma$ also satisfies
the definition of a train track map.
This formulation of train track maps as twisted equivariant maps of trees
can be adapted in a straightforward way to automorphisms of groups acting on trees.

\begin{THM}[Tree version]
    Suppose a group $F$ acts cocompactly on a simplicial tree $T$,
    that $\Phi\colon F \to F$ is an automorphism,
    and that $\tilde f\colon T \to T$ is a $\Phi$-twisted equivariant map.
    Assuming one of the following conditions holds,
    there exists a $\Phi'$-twisted equivariant relative train track map
    $\tilde f'\colon T' \to T'$,
    where $\Phi$ and $\Phi'$ represent the same outer automorphism $\varphi$
    and where $T$ and $T'$ belong to the same deformation space $\mathscr{D}$
    in the sense of Forester \cite{Forester,GuirardelLevittDeformation}.
    \begin{enumerate}
        \item Let $T''$ be a reduced tree in $\mathscr{D}$.
            Assume that edge stabilizers of $T''$ are finitely generated
            and that no iterate of $\Phi$
            maps a generalized edge group of $T''$
            properly into a conjugate of itself.
        \item Assume that $F$ is finitely generated, $T$ is locally finite,
            and that the subgroup $\Mod(\mathscr{D})$ of $\out(F)$ leaving $\mathscr{D}$ invariant
            acts with finitely many orbits of cells
            on the deformation retract $P\mathscr{G}$ of $\mathscr{D}$
            considered in \cite[Theorem 7.6]{GuirardelLevittDeformation}
            \cite{Clay}.
    \end{enumerate}
    If $\varphi$ is irreducible, then the relative train track map constructed is a train track map.
\end{THM}

Two $F$-trees $T$ and $T'$ \emph{belong to the same deformation space} if and only if
\cite[Theorem 3.8]{GuirardelLevittDeformation}
there exist $F$-equivariant maps $T \to T'$ and $T' \to T$.
A tree $T$ is \emph{reduced} if collapsing any orbit of edges of $T$ yields a tree
not in the same deformation space.
A \emph{generalized edge stabilizer} is a subgroup $H \le F$
with the property that $H$ contains the stabilizer of some edge $\tilde e$ of $T$
and is contained in the stabilizer of another edge $\tilde e'$ of $T$.
Each relative train track map $\tilde f\colon T\to T$ is a \emph{morphism:}
after $F$-equivariantly subdividing edges in the domain tree into finitely many edges,
the map $\tilde f$ becomes \emph{simplicial,}
in the sense that it maps edges to edges.
Twisted equivariant morphisms of trees map edge stabilizers to generalized edge stabilizers.

Relative train track maps are defined in \Cref{relativetraintracksection}.
This is not the first construction of relative train track maps
on graphs of groups,
see \cite{CollinsTurner}, \cite{FrancavigliaMartino} and \cite{Sykiotis},
but it is the most general,
allowing in particular for infinite edge groups
(but see \cite{Meinert} in the irreducible case).
The proof of \Cref{relativetraintrack}
relies on an algorithm of Bestvina--Handel \cite{BestvinaHandel},
which requires a bound on the number of edges of $G$,
the underlying graph of $\mathcal{G}$.
Without further assumptions on $\mathcal{G}$,
it appears at least possible that certain problematic  valence-two
vertices could proliferate in $G$,
destroying any guarantee that the algorithm will terminate.
It would be very interesting to have an example where this proliferation
actually occurs.
Perhaps an example could be found in considering the group
\[  \langle a, s, t : tat^{-1} = sas^{-1} = a^2 \rangle. \]

Since one is primarily interested in using train track maps
to study \emph{outer} automorphisms,
the choice of automorphism $\Phi$ in the statement of \Cref{relativetraintrack} is inconvenient.
It would be more convenient
to be able to work directly in the quotient graph of groups.
This is the purpose of this paper.

Bass \cite{Bass} defines a notion of a morphism of a graph of groups
and proves that his morphisms of graphs of groups induce twisted equivariant simplicial maps of trees
and vice versa.
In \Cref{morphisms},
we offer an expanded definition of a \emph{map} of a graph of groups
and prove that our maps induce twisted equivariant maps of trees sending vertices to vertices
and vice versa.
We define \emph{homotopy} of maps and when a map is a \emph{homotopy equivalence.}
The Bass--Serre trees of homotopy equivalent graphs of groups belong to the same deformation space
and conversely.

\setcounter{THM}{0}
\begin{THM}[Graph of groups version]
    \label{relativetraintrack}
    Let $\mathcal{G}$ be a finite, connected graph of groups,
    $\varphi$ be an outer automorphism of $\pi_1(\mathcal{G})$,
    and suppose that $\varphi$ is induced by a map $f\colon \mathcal{G} \to \mathcal{G}$
    satisfying one of the following conditions.
    Then there exists a relative train track map $f'\colon \mathcal{G}' \to \mathcal{G}'$
    representing $\varphi$ on a graph of groups $\mathcal{G}'$
    homotopy equivalent to $\mathcal{G}$.
    \begin{enumerate}
        \item Let $\mathcal{G}''$ be a reduced graph of groups homotopy equivalent to $\mathcal{G}$.
            Assume that edge groups of $\mathcal{G}''$ are finitely generated 
            and for some and hence every $\Phi$ representing $\varphi$,
            no iterate of the map $\Phi$ induces a proper inclusion
            of a generalized edge group of $\mathcal{G}$ into itself.
        \item Assume that vertex groups of $\mathcal{G}$ are finitely generated
            and edge groups of $\mathcal{G}$ have finite index
            in their incident vertex groups.
            Assume further that 
            there are only finitely many isomorphism types of graphs of groups $\mathcal{G}'$
            homotopy equivalent to $\mathcal{G}$ with the property
            that each edge $e$ of $\mathcal{G}'$ is \emph{surviving,}
            in the sense that there is some reduced collapse of $\mathcal{G}'$
            in which the edge $e$ is not collapsed.
    \end{enumerate}
    If $\varphi$ is irreducible,
    then the relative train track map constructed is a train track map.
\end{THM}

Item 1 of \Cref{relativetraintrack} applies in particular
whenever edge groups are finite, 
(or more generally when generalized edge groups are co-Hopfian)
and thus to all accessible groups with infinitely many ends.
Item 2 applies in particular to certain generalized Baumslag--Solitar groups.

Part of this work originally appeared in the author's thesis \cite{MyThesis}.
The author would like to thank Lee Mosher for many helpful conversations
and comments on early drafts of this work,
Chlo\'e Papin for conversations which led to realizing 
that further assumptions were necessary,
Mark Feighn and Mladen Bestvina for suggestions on alternate assumptions
and the anonymous referee for numerous comments
and a very careful reading
which helped improve the exposition of this article.

The strategy of the proofs in this paper
is to find the correct equivariant perspective
so that the original arguments in \cite{BestvinaHandel} 
and \cite{FeighnHandelAlg}
can be adapted without too much  extra effort.

Here is the organization of the paper.
We build up the aforementioned equivariant perspective in \Cref{morphisms}.
The proof of \Cref{relativetraintrack}
follows the outline in \cite{BestvinaHandel};
it occupies \Cref{traintracksection} and \Cref{relativetraintracksection} here.

\section{Maps of graphs of groups}
\label{morphisms}
The purpose  of this section is to define \emph{maps} of graphs of groups
and discuss their relationship with twisted equivariant maps of trees.
\emph{Morphisms} of graphs of groups were originally defined by Bass \cite{Bass}.
Our definition differs from his in two main respects:
first, while his morphisms send edges to edges,
our maps may send edges to edge paths (which may contain no edges),
and second, we require our maps to respect basepoints.
Assuming a map does not collapse edges to vertices,
we call it a morphism, and 
it becomes a morphism in the sense of Bass after subdividing edges in the domain graph of groups
into finitely many edges.

Let us take up the discussion from the introduction.
Let $F$ and $F'$ be groups acting on simplicial trees $\Gamma$ and $\Gamma'$,
let $\Phi\colon F \to F'$ be a homomorphism,
and suppose there is a (continuous) map $\tilde f\colon \Gamma \to \Gamma'$
which is $\Phi$-twisted equivariant in the sense that for all $\tilde x\in \Gamma$
and all $g \in F$, we have
\[  \tilde f(g.\tilde x) = \Phi(g).\tilde f(\tilde x). \]
An \emph{equivariant homotopy} between two $\Phi$-twisted equivariant maps $\tilde f$ and $\tilde f'$
is a homotopy $\tilde f_t\colon \Gamma \to \Gamma'$
with  $\tilde f_0  = \tilde f$ and $\tilde f_1 = \tilde f'$
such that each map $\tilde f_t$ is $\Phi$-twisted equivariant.
A $\Phi$-twisted equivariant map $\tilde f\colon \Gamma \to \Gamma'$ is a \emph{homotopy equivalence}
if $\Phi$ is an isomorphism and there exists
a $\Phi^{-1}$-twisted equivariant map $\tilde g \colon \Gamma'  \to \Gamma$
such that $\tilde f\tilde g$ and $\tilde g\tilde f$ are equivariantly homotopic to the identity.
If we use $\Phi$ to identify $F$ with $F'$,
this says that $\Gamma$ and $\Gamma'$ belong to the same \emph{deformation space}
in the sense of \cite{Forester} \cite{GuirardelLevittDeformation}.

Each $\Phi$-twisted equivariant map $\tilde f\colon \Gamma \to \Gamma'$
is equivariantly homotopic to a map $\tilde f'\colon \Gamma \to \Gamma'$
which sends vertices to vertices
and which has the property that for each edge $\tilde e$ of $\Gamma$,
either $\tilde f(\tilde e)$ is a vertex
or after subdividing $\tilde e$ into finitely many edges,
the map $\tilde f$ restricted to the newly created edges is simplicial.
We will work exclusively with such maps.
We say the image of $\tilde f(\tilde e)$ is an \emph{edge path}
$\tilde e'_1\ldots \tilde e'_k$ in $\Gamma'$.
In other words, if $\tilde f$ does not collapse edges,
it is a $\Phi$-twisted equivariant \emph{morphism} of trees.

\paragraph{Quotient graph of groups.}
A \emph{graph of groups} $\mathcal{G}$ is a graph $G$ (i.e.~a $1$-dimensional CW complex),
which we usually assume to be connected,
together with, for each edge $e$ and vertex $v$ of $G$,
an assignment of groups $\mathcal{G}_e$ and $\mathcal{G}_v$.
For an oriented edge $e$ with initial vertex $v$ and terminal vertex $w$,
there are injective homomorphisms
$\iota_{\bar e} \colon \mathcal{G}_e \to \mathcal{G}_v$
and  $\iota_e \colon \mathcal{G}_e \to \mathcal{G}_{w}$, respectively.
(We have $\mathcal{G}_{\bar e} = \mathcal{G}_e$.)
The reader is referred to \cite{Bass,Trees,ScottWall,MyThesis} 
for additional background on graphs of groups,
although we give a reasonably self-contained exposition of the aspects of the theory we will use.

Suppose $F$ is a group acting on a simplicial tree $\Gamma$
by simplicial automorphisms,
and suppose that the action is \emph{without inversions in edges,}
i.e.~that no group element sends an edge $\tilde e$ to itself reversing orientation.
This can always be arranged by passing to the barycentric subdivision of $\Gamma$.
There is a \emph{quotient graph of groups} $\mathcal{G}$, which we now describe.

Since the action of $F$ on $\Gamma$ is without inversions in edges,
the quotient $F\backslash\Gamma$ naturally inherits the structure of a graph
 from $\Gamma$, call this graph $G$.
The graph of groups structure on $G$ depends on a choice
of \emph{fundamental domain} for the action of $F$ on $\Gamma$;
we now describe how to choose the fundamental domain.
Choose a spanning tree $S \subset G$
and lift $S$ to $\tilde S \subset \Gamma$.
For each edge $e \notin S$,
choose a lift $\tilde e$ in $\Gamma$
such that one vertex of $\tilde e$ belongs to $\tilde S$.
Write $T$ for the union of $\tilde S$ with the (closed) edges $\tilde e$
for $e \notin S$.
For $v$ a vertex of $G$, set $\mathcal{G}_v$ to be the stabilizer of the unique preimage $\tilde v$
of $v$  in $\tilde S$.
For $e$ an edge of $G$, set $\mathcal{G}_e$ to be the stabilizer of the unique preimage $\tilde e$
of $e$ in $T$.
Let $e$ be an oriented edge with terminal vertex $v$
and write $\tilde x$ for the terminal vertex of $\tilde e$.
By definition, there is some group element $g_e \in F$
such that $g_e.\tilde x = \tilde v$.
If $\tilde x = \tilde v$, choose $g_e = 1 \in F$.
If $h \in F$ stabilizes $\tilde e$,
then $g_ehg_e^{-1}$ stabilizes $\tilde v$,
so define $\iota_e\colon \mathcal{G}_e \to \mathcal{G}_v$ to be the map
$h \mapsto g_ehg_e^{-1}$.
This defines the graph of groups structure $\mathcal{G}$ on the quotient graph $G$.

\paragraph{Graphs of spaces.}
Associated to a graph of groups $\mathcal{G}$ with underlying graph $G$,
we can build a \emph{graph of spaces} $X_\mathcal{G}$.
See \cite{ScottWall} for more details.
For a vertex $v$ of $G$, take a connected, CW complex $X_v$
with one vertex $\star_v$ which is a $K(\mathcal{G}_v,1)$
and fix an identification $\pi_1(X_v,\star_v) = \mathcal{G}_v$.
Do the same for each edge $e$ of $G$, producing a CW complex $X_e$
with one vertex $\star_e$ and $\pi_1(X_e,\star_e ) = \mathcal{G}_e$.
Suppose the oriented edge $e$ has initial vertex $v$ and terminal vertex $w$.
Associated to the homomorphisms $\iota_e\colon \mathcal{G}_e \to \mathcal{G}_{w}$
and $\iota_{\bar e}\colon \mathcal{G}_e \to \mathcal{G}_{v}$,
there are continuous, skeleta-preserving maps 
$i_e \colon (X_e,\star_e) \to (X_{w},\star_{w})$
and $i_{\bar e}\colon (X_e,\star_{e}) \to (X_{v},\star_{v})$
such that the induced maps on fundamental groups satisfy
$(i_e)_\sharp = \iota_e$ and $(i_{\bar e})_\sharp = \iota_{\bar e}$.
If $V$ is the set of vertices of $G$ and $E$ is the set of oriented edges,
the graph of spaces $X_\mathcal{G}$ is the quotient of the disjoint union
\[  \coprod_{v \in V} X_v \amalg \coprod_{e \in E} (X_e \times [0,1]) \]
by the equivalence relation identifying $(x, 1) \in X_e\times [0,1]$
with $i_e(x) \in X_{w}$,
where $w$ is the terminal vertex of the oriented edge $e$,
and identifying $(x,t) \in X_e\times [0,1]$ with $(x, 1-t) \in X_{\bar e}\times [0,1]$.
Thus $X_\mathcal{G}$ is a CW complex.
Note that after identifying each (open) edge of $G$ with $(0,1)$,
there is a surjection $X_\mathcal{G} \to G$
whose fibers are naturally identified with the spaces $X_v$ and $X_e$.
By identifying $G$ with the subspace of $X_\mathcal{G}$
comprising the points $\star_v$ and $(\star_e,t)$ for $t \in (0,1)$,
we can view the map $X_\mathcal{G} \to G$ as a retraction.

\paragraph{The fundamental group of a graph of groups.}
The \emph{fundamental group of the graph of groups $\pi_1(\mathcal{G})$}
is the fundamental group of the graph of spaces $X_\mathcal{G}$.
For convenience, choose a basepoint $p \in X_\mathcal{G}$ in the image of the retraction
$X_\mathcal{G} \to G$.
Each loop in $\pi_1(X_\mathcal{G},p)$ is homotopic into the $1$-skeleton of $X_\mathcal{G}$,
and thus may be represented as an \emph{edge path} $\gamma$ of the form
\[  \gamma = e'_1g_1e_2g_2\ldots e_kg_ke'_{k+1} \]
where $e_2,\dotsc,e_k$ are edges of $G$, $e'_1$ and $e'_{k+1}$
terminal and initial segments of edges $e_1$ and $e_{k+1}$ of $G$ respectively,
where the $g_i$ for $1 \le i \le k$ are elements of $\pi_1(X_{v_i},\star_{v_i}) = \mathcal{G}_{v_i}$,
and where $v_i = \tau(e_i) = \tau(\bar e_{i+1})$.
We allow the case where $e'_1$ and $e'_{k+1}$ are empty,
in which case they will be dropped from the notation.
A path is \emph{nontrivial} if it contains (a segment of) an edge.

Notice that under our identifications of $\pi_1(X_v,\star_v)$ with $\mathcal{G}_v$,
the notion of an edge path in $\mathcal{G}$ makes sense without reference to $X_\mathcal{G}$.
Homotopy rel endpoints of paths in $X_\mathcal{G}$
yields a corresponding notion of homotopy rel endpoints for edge paths in $\mathcal{G}$.
It is generated by replacing a segment of the form $e\iota_e(h)$ with $\iota_{\bar e}(h)e$,
where $e$ is an edge and $h \in \mathcal{G}_e$ is an element of the edge group,
and by adding or removing segments of the form $e\bar e$ for an edge $e$.
An edge path $\gamma$ is \emph{tight} if the number of edges in $\gamma$ 
cannot be lessened by a homotopy.

\paragraph{Maps of graphs of groups.}
A \emph{map} of graphs of groups
is a pair of maps $f\colon G \to G'$ and 
$f_X \colon X_\mathcal{G} \to X_{\mathcal{G}'}$
such that the following diagram commutes
\[  \begin{tikzcd}
    X_\mathcal{G} \ar[r, "f_X"] \ar[d, "r"] & X_{\mathcal{G}'} \ar[d, "r'"] \\
    G \ar[r, "f"] & G',
\end{tikzcd}\]
where $r$ and $r'$ are the retractions.
A \emph{homotopy} of maps is a pair of homotopies $f_{X,t}\colon X_{\mathcal{G}} \to X_{\mathcal{G}}$
and $f_t \colon G \to G$ 
such that for each $t$, the diagram of the form above commutes.
A map $f\colon \mathcal{G} \to \mathcal{G}'$
is a \emph{homotopy equivalence}
if there exists a map $g\colon \mathcal{G}' \to \mathcal{G}$ 
such that $fg$ and $gf$ are each homotopic  to the respective identity maps.
If $f$ is a homotopy equivalence, then the map $f_X$ induces an isomorphism of fundamental groups
$\pi_1(X_\mathcal{G}) \to \pi_1(X_{\mathcal{G}'})$,
but this is not a sufficient condition for $f$ to be a homotopy equivalence in general.
By the cellular approximation theorem,
every map $f\colon \mathcal{G} \to \mathcal{G}'$ of graphs of groups is homotopic 
to a map $f'\colon \mathcal{G} \to \mathcal{G}'$
with the property that the map $f'\colon G \to G'$
sends vertices to vertices
and either collapses edges to vertices or expands edges over edge paths
and the property that the map $f'_X \colon X_{\mathcal{G}} \to X_{\mathcal{G}'}$
sends the vertices $\star_v$ of $X_{\mathcal{G}}$ to vertices of $X_{\mathcal{G}'}$.
We will only consider such maps.

For such a map of graphs of groups, we turn now to collecting information
that will let us describe maps of graphs of groups 
without reference to $X_{\mathcal{G}}$ and $X_{\mathcal{G}'}$.
For each vertex $v$ of $G$, the map $f'_X$ induces a homomorphism
$f'_v\colon \mathcal{G}_v \to \mathcal{G}'_{f(v)}$,
and sends the oriented edge $e$ of $G$ 
(thought of as the subspace $\{\star_e\}\times[0,1]$ of $X_{\mathcal{G}}$)
to an edge path $f'(e) = g_0e'_1g_1\ldots e'_kg_k$ in $\mathcal{G}'$.
Notice that the edge path $e'_1\ldots e'_k$ in $G'$ is (homotopic to) the image of the edge $e$
under the map of graphs $f'\colon G \to G'$.

Suppose the edge $e$ has initial vertex $v$ and terminal vertex $w$ and that
the edge path $f'(e)$ is trivial, i.e.~$f'(e) = g_0$.
Then $f'(v) = f'(w)$,
and the images in $X_{\mathcal{G}'}$ of the $2$-cells of $X_{\mathcal{G}}$ 
recording the relations $\iota_{\bar e}(h)e\iota_{e}(h^{-1})\bar e$ for each $h \in \mathcal{G}_e$ 
imply that the following diagram commutes
\[  \begin{tikzcd}
    \mathcal{G}_e \ar[r, "\iota_e"] \ar[d, "\iota_{\bar e}"] 
    & \mathcal{G}_w \ar[d, "\operatorname{ad}(g_0)f'_w"] \\
    \mathcal{G}_v \ar[r, "f'_v"] & \mathcal{G}'_{f'(v)} = \mathcal{G}'_{f'(w)},
\end{tikzcd}    \]
where $\operatorname{ad}(g_0)$ is the inner automorphism $x \mapsto g_0 x g_0^{-1}$.
If the edge path $f'(e) = g_0e'_1g_1\ldots e'_kg_k$ is nontrivial,
then there are homomorphisms $f'_{e,e'_i} \colon \mathcal{G}_e \to \mathcal{G}'_{e'_i}$
for each $i$ satisfying $1 \le i \le k$,
and similarly we have the following commutative diagram
where $v_i$ is the terminal vertex of $e'_i$ and the initial vertex of $e'_{i+1}$
\[  \begin{tikzcd}
    \mathcal{G}_v \ar[d, "f'_v"] & & & \mathcal{G}_{e} \ar[lll, "\iota_{\bar e}"'] \ar[rrr, "\iota_e"]
    \ar[lld, "f'_{e,e'_i}"'] \ar[rrd, "f'_{e,e'_{i+1}}"] & & & \mathcal{G}_w 
    \ar[d, "\operatorname{ad}(g_k)f'_w"] \\
    \mathcal{G}'_{f'(v)} & \cdots \ar[l, "\operatorname{ad}(g_0)\iota_{\bar e'_1}"']
                         \mathcal{G}'_{e'_i} \ar[rr,"\iota_{e'_i}"]
                         & & \mathcal{G}'_{v'_i}  &
                         & \mathcal{G}'_{e'_{i+1}} \ar[ll, "\operatorname{ad}(g_i)\iota_{\bar e'_{i+1}}"'] 
                         \cdots \ar[r, "\iota_{e'_k}"] & \mathcal{G}'_{f'(w)}.
\end{tikzcd}\]
This diagram, coupled with the rule that $\iota_{\bar e'_i}(h)e'_i = e'_i\iota_{e'_i}(h)$
for $h \in \mathcal{G}_{e'_i}$
implies that we have
\begin{gather*}  
    f'_v\iota_{\bar e}(h)f'(e) = f'_v\iota_{\bar e}(h)g_0e'_1g_1\ldots e'_kg_k 
    = g_0 \iota_{\bar e'_1}f'_{e,e'_1}(h) e'_1 g_1 \ldots e'_kg_k  \\ =  \cdots = 
    g_0 e'_1 g_1 \ldots e'_k g_kf'_w\iota_e(h) = f'(e)f'_w\iota_{e}(h). 
\end{gather*}
Notice as well that because each homomorphism on the bottom row is injective,
the images of the maps $f'_{e,e'_i}$ are all abstractly isomorphic.
We will understand a \emph{map of graphs of groups} $f'\colon \mathcal{G} \to \mathcal{G}'$
to be the data of a map of graphs $f'\colon G \to G'$
which either collapses edges to vertices or maps edges to edge paths,
edge paths $f'(e) = g_0e'_1g_2\ldots e'_kg_k  \in \mathcal{G}'$,
and the homomorphisms $f'_v\colon \mathcal{G}_v \to \mathcal{G}'_{f(v)}$
and $f'_{e,e'_i} \colon\mathcal{G}_e \to \mathcal{G}_{e'_i}$
subject to the \emph{compatibility conditions} given by the above commutative diagrams.
A map of graphs of groups is a \emph{collapse map} if
the map of graphs $f\colon G \to G'$ either sends edges to edges or collapses edges to vertices.
It is a \emph{morphism} if the map of graphs  $f\colon G \to G'$ does not collapse edges.
If $f\colon \mathcal{G} \to \mathcal{G}$ is a homotopy equivalence,
a morphism,
and for each edge $e$ of $G$,
the edge path $f(e) = g_0e_1g_1\ldots e_kg_k$ is tight,
then we say $f$ is a \emph{topological representative}
of the induced outer automorphism $\varphi \in \out(\pi_1(\mathcal{G}))$.

Given a map of graphs of groups $f\colon \mathcal{G} \to \mathcal{G}'$,
the rule 
\[  g \in \mathcal{G}_v \mapsto f_v(g) \in \mathcal{G}'_{f(v)} \quad\text{and}\quad 
e \mapsto f(e) = g_0e'_1g_1  \ldots e'_kg_k \]
defines the action of $f$ on paths in $\mathcal{G}$ and a well-defined homomorphism 
$f_\sharp\colon \pi_1(X_{\mathcal{G}},\star_w) \to \pi_1(X_{\mathcal{G}'},\star_{f(w)})$.
Since $X_{\mathcal{G}}$ is a $K(\pi_1(X_{\mathcal{G}}),1)$,
the homomorphism $f_\sharp$ is induced by a map $f_X \colon X_{\mathcal{G}} \to X_{\mathcal{G}'}$.
We claim that we can choose $f_X$ and $f$ within their homotopy classes so that the pair $(f_X,f)$
defines a map of graphs of groups.
Since we have little further need of $X_{\mathcal{G}}$, we leave the details to the interested reader.

The following operations preserve the homomorphism
$f_\sharp\colon \pi_1(\mathcal{G},w) \to \pi_1(\mathcal{G}',f(w))$,
and we claim that it follows that the resulting  maps of graphs of groups are homotopic,
but again we leave the demonstration to the reader.
\begin{enumerate}
    \item Replace the edge path $f(e) \in \mathcal{G}'$ with a path which is homotopic
        to it rel endpoints
        and replace the homomorphisms $f_{e,e'_i}$ with new homomorphisms
        so that the compatibility conditions above are still satisfied.
        Note that since $f_v$ and $f_w$ are unchanged under this operation,
        the images of the new edge group homomorphisms must be abstractly isomorphic
        to the original $f_{e,e'_i}$.
        This restricts the possible homotopies we can perform,
        but does not prevent us from tightening $f(e)$ if it is not already tight.
    \item Suppose that $v \ne w$ is a vertex of $G$
        and that $g \in \mathcal{G}'_{f(v)}$.
        Replace $f_v$ with $\operatorname{ad}(g)f_v$
        and for each oriented edge $e$ of $G$ with initial vertex $v$,
        replace the edge path $f(e)$ with $gf(e)$.
    \item Suppose that $v \ne w$ is a vertex of $G$,
        that $e'$ is an edge of $G'$ with terminal vertex $f(v)$
        and initial vertex $v'$ such that $\iota_{e'}(\mathcal{G}'_{e'})$
        contains the image of $f_v$.
        Change the map $f$ of graphs by a homotopy supported in a neighborhood of $v$ 
        by pulling the image of $v$ across $\bar e'$
        so that the new map called $f'$ satisfies $f'(v) = v'$.
        The new map $f'_v$ is $\iota_{\bar e'}\iota_{e'}^{-1}f_v$.
        That is to say, the map $f'_v$ is accomplished by viewing $f_v(\mathcal{G}_v)$
        as a subgroup of $\mathcal{G}'_{e'}$ then mapping it to $\mathcal{G}'_{v'}$.
        For each oriented edge $e$ of $G$ with initial vertex $v$,
        we have the new edge path $f'(e) = e'f(e)$.
        Add the homomorphism $f'_{e,e'}\colon \mathcal{G}_e \to \mathcal{G}'_{e'}$
        defined as $f'_{e,e'}(h) = \iota_{e'}^{-1}f_v\iota_{\bar e}(h)$.
        Observe that the compatibility conditions are satisfied.
\end{enumerate}
If, ignoring the stipulations around the basepoint $w$,
the map $f\colon \mathcal{G} \to \mathcal{G}'$ can be transformed
to a map $f'\colon \mathcal{G} \to \mathcal{G}'$ by a finite number
of the above operations,
we will say that $f$ and $f'\colon \mathcal{G} \to \mathcal{G}'$ are \emph{homotopic.}

\paragraph{The Bass--Serre tree.}
Let $\mathcal{G}$ be a graph of groups and
let $p \in G$ be a basepoint.
For $v$ a vertex of $G$, write $[p,v]$ for the set of homotopy classes of paths in $\mathcal{G}$ 
from $p$ to $v$.
The group $\mathcal{G}_v$ acts on $[p,v]$ on the right:
an element $g$ sends the homotopy class $[\gamma]$ of the path $\gamma$ to 
the homotopy class of the composite path $[\gamma g]$.
Let $V$ denote the set of vertices of $G$.
The set
\[  \coprod_{v \in V}[p,v]/\mathcal{G}_v \]
forms the vertex set of a graph,
where two elements $[\gamma]\mathcal{G}_v$ and $[\gamma']\mathcal{G}_w$ are adjacent
if the path $\bar\gamma\gamma'$ is homotopic to a path of length one.
The fundamental group $\pi_1(\mathcal{G},p)$ acts naturally on this graph;
for example
the homotopy class of a loop $[\sigma]$ sends the vertex
$[\gamma]\mathcal{G}_v$ to the vertex $[\sigma\gamma]\mathcal{G}_v$.
The fundamental theorem of Bass--Serre theory asserts that this graph is a tree \cite[Theorem 1.17]{Bass},
and that the quotient graph of groups of the action  of $\pi_1(\mathcal{G},p)$ on this tree
may be identified with $\mathcal{G}$ \cite[Corollary 3.7]{Bass}, \cite[Chapter I, Theorem 13]{Trees}.
(Let us remark that non-vertex points of the Bass--Serre tree
may be identified with homotopy classes of paths in $\mathcal{G}$
that do not end at vertices.)

Indeed, suppose we began with a group $F$ acting on a tree $\Gamma$
with quotient graph of groups $\mathcal{G}$ defined relative to the
choice of spanning tree $S$ containing $p$, 
of fundamental domain $T$ and of group elements $g_e$ in $F$.
For $v$ a vertex of $G$, let $\gamma_v$ denote the unique tight path in $S$
as an ordinary graph from $p$ to $v$.
Then for $e$ an edge of $G$ and $g \in \mathcal{G}_v$ an element of a vertex group,
thought of as the stabilizer of $\tilde v$ in $F$,
the map
\[   [\gamma_{\tau(\bar e)}e\bar\gamma_{\tau(e)}] \mapsto g_e
\text{ and } [\gamma_{v}g\bar\gamma_v] \mapsto g\]
defines an isomorphism $\Phi\colon \pi_1(\mathcal{G},p) \to F$.
For $v$ a vertex of $\mathcal{G}$, let $\tilde v$ be the unique preimage of $v$ in $\tilde S$.
For $\tilde e$ an edge of $T \setminus \tilde S$, orient $\tilde e$ 
so that its initial vertex $\tilde w$ is in $\tilde S$ and its terminal vertex $\tilde w'$ is not.
The smallest subtree of the Bass--Serre tree containing the vertices
\[  [\gamma_v]\mathcal{G}_v \quad\text{and}\quad [\gamma_we]\mathcal{G}_{w'} \]
is a fundamental domain for the action of $\pi_1(\mathcal{G},p)$ on its Bass--Serre tree,
and the map
\[  [\gamma_v]\mathcal{G}_v \mapsto \tilde v \quad\text{and}\quad
[\gamma_we]\mathcal{G}_{w'} \mapsto \tilde w' \]
extends to a $\Phi$-twisted equivariant simplicial isomorphism between the Bass--Serre tree of $\mathcal{G}$
and $\Gamma$ taking this fundamental domain to $T$.

We will use $\Phi$ to identify $\pi_1(\mathcal{G},p)$ with $F$
and this $\Phi$-twisted equivariant simplicial isomorphism
to identify $\Gamma$ with the Bass--Serre tree of $\mathcal{G}$.
Write $\pi\colon\Gamma \to G$ for the natural projection.

\paragraph{Normal forms, projecting and lifting paths.}
Suppose $\gamma$ is a tight path in $\mathcal{G}$ 
from a point $x$ in the interior of an edge to a point $y$,
and that $\tilde x = [\sigma]$ is a lift of $x$ to $\Gamma$.
Then the unique tight path $\tilde\gamma$ from $\tilde x = [\sigma]$ to $[\sigma\gamma]$
(or $[\sigma\gamma]\mathcal{G}_y$ if $y$ is a vertex)
is a tight path in $\Gamma$ which \emph{lifts} $\gamma$ in the sense that
$\pi(\tilde\gamma)$ has the same underlying edge path in $G$ as $\gamma$.
If $x$ is a vertex,
then for each $g \in \mathcal{G}_x$,
there is a unique tight path $\tilde\gamma$ from $\tilde x = [\sigma]\mathcal{G}_x$
to $[\sigma g\gamma]$
(or $[\sigma g\gamma]\mathcal{G}_y$ if $y$ is a vertex)
which lifts $\gamma$.
If $\gamma$ is \emph{not} a tight path,
we may decompose it into a concatenation of tight paths
and successively lift those paths to obtain a lift of $\gamma$.
We would like to define a notion of \emph{projecting} paths from $\Gamma$ to $\mathcal{G}$
so that lifting and projecting are (nearly) inverse operations.

Recall \cite[1.12]{Bass} that given a choice,
for each oriented edge $e$ with terminal vertex $v$,
of a set $S_e$ of left coset representatives for $\mathcal{G}_v/\iota_e(\mathcal{G}_e)$
containing $1 \in \mathcal{G}_{v}$,
there is a \emph{normal form} for edge paths in $\mathcal{G}$,
where an edge path
\[  \gamma = e'_1g_1e_2\ldots g_{k-1}e_kg_ke'_{k+1} \]
is in normal form if it is tight and each $g_i$ for $1 \le i \le k-1$ belongs to $S_{\bar e_{i+1}}$.
A path can be inductively put in normal form by performing homotopies from ``left to right.''

Suppose $\tilde x$ is a point $[\sigma]$ 
(or $[\sigma]\mathcal{G}_x$ if $\tilde x$ is a vertex)
of $\Gamma$ and that $\tilde y$ is a point $[\eta]$
(or $[\eta]\mathcal{G}_y$ if $\tilde y$ is a vertex)
of $\Gamma$,
and let $\tilde\gamma$ be the unique tight path from $\tilde x$ to $\tilde y$.
If $\tilde y$ is a vertex, choose $\eta$ in its $\mathcal{G}_y$-orbit
so that its final vertex group element is $1$.
Let $\gamma = \pi(\tilde\gamma)$ be the unique path in normal form homotopic to $\bar\sigma\eta$,
thus  $\gamma$ has the form
\[  \gamma = e'_1g_1e_2\ldots g_{k-1}e_k1 \]
where each $g_i$ for $1 \le i \le k-1$ belongs to $S_{\bar e_{i+1}}$.
One checks that if $\tilde x$ is not a vertex,
then the lift of $\gamma$ defined above is $\tilde\gamma$.
If $\tilde x = [\sigma]\mathcal{G}_x$ is a vertex and choose $\sigma$ in its $\mathcal{G}_x$-orbit
so that its final vertex group element is $1$,
then the lift of $\gamma$ defined above corresponding to the choice of $1 \in \mathcal{G}_x$
is $\tilde\gamma$.

Conversely, if we first lift the normal-form path $\gamma$ to $\tilde\gamma$ and then project
to $\pi(\tilde\gamma)$ as in the previous paragraph, then $\pi(\tilde\gamma)$ and $\gamma$
agree except for the final vertex group element if $y$ is a vertex.

\begin{prop}[Lifting maps, cf.~Proposition 2.4 of \cite{Bass}]
    \label{liftingtotheuniversalcover}
    Suppose that $f\colon (\mathcal{G},v) \to (\mathcal{G}',v')$
    is a map of graphs of groups
    and that $(\Gamma,\tilde v)$ and $(\Gamma',\tilde v')$ are the Bass--Serre trees
    of $\mathcal{G}$ and $\mathcal{G}'$ respectively.
    There exists an $f_\sharp$-twisted equivariant map 
    $\tilde f\colon (\Gamma,\tilde v) \to (\Gamma',\tilde v')$
    such that the following diagram commutes
    as maps of underlying graphs
    \[  \begin{tikzcd}
        (\Gamma,\tilde v) \ar[r, dashed, "\tilde f"] \ar[d, "\pi"] & (\Gamma',\tilde v') \ar[d, "\pi'"] \\
        (\mathcal{G},v) \ar[r, "f"] & (\mathcal{G}',v'),
    \end{tikzcd}\]
    where $\pi$ and $\pi'$ are the natural projections.
    Furthermore, the normal form of the path $f\pi(\tilde\gamma)$ agrees with the
    path $\pi'f(\tilde\gamma)$ except for the final vertex group element if $\tilde\gamma$
    ends at a vertex.
    If the maps $f$ and $f'$ are homotopic fixing $v$,
    then the lifted maps $\tilde f$ and $\tilde f'$ are equivariantly homotopic.
\end{prop}

It follows from the final claim that if $f$ is a homotopy equivalence, then $\tilde f$ is too.

\begin{proof}
    A point $\tilde p \in \Gamma$ corresponds to a homotopy class $[\gamma]$ 
    of paths from $v$ to $p = \pi(\tilde p)$
    (or to $[\gamma]\mathcal{G}_p$ if $p$ is a vertex).
    Define $\tilde f(\tilde p) = [f(\gamma)]$
    (or $[f(\gamma)]\mathcal{G}'_{f(p)}$ if $f(p)$ is a vertex).
    A simple calculation shows that $\tilde f$ is $f_\sharp$-twisted  equivariant
    and that the claimed properties hold.
\end{proof}

\begin{prop}[Projecting maps, cf.~4.1--4.5 of \cite{Bass}]
    \label{takingthequotient}
    Suppose $F$ and $F'$ are groups acting on trees $\Gamma$ and $\Gamma'$ respectively
    with quotient graphs of groups $\mathcal{G}$ and $\mathcal{G}'$.
    Let $\Phi\colon F \to F'$ be a homomorphism
    and let $\tilde f\colon (\Gamma,\tilde v) \to (\Gamma',\tilde v')$
    be a $\Phi$-twisted equivariant map of trees.
    There is a map $f\colon (\mathcal{G},v) \to (\mathcal{G}',v')$
    of graphs of groups
    such that the following diagram commutes as maps of underlying graphs
    \[  \begin{tikzcd}
        (\Gamma,\tilde v) \ar[r, "\tilde f"] \ar[d, "\pi"] & (\Gamma',\tilde v') \ar[d, "\pi'"] \\
        (\mathcal{G},v) \ar[r, dashed, "f"] & (\mathcal{G}',v'),
    \end{tikzcd}  \]
    where $\pi$ and $\pi'$ are the natural projections.
    Furthermore, the normal form of the path $f\pi(\tilde\gamma)$ agrees with the path $\pi'\tilde f(\tilde\gamma)$
    except for the final vertex group element if $\tilde\gamma$ ends at a vertex.
    If two maps $\tilde f$ and $\tilde f'$ are $\Phi$-twisted equivariantly homotopic
    then $f$ and $f'$ are homotopic.
\end{prop}

\begin{proof}
    By $\Phi$-twisted equivariance, the map $\pi'\tilde f$ yields a well-defined map
    on $\pi_1(\mathcal{G},v)$-orbits;
    this is the map $f\colon G \to G'$ as a map of graphs.

    Let $T \subset \Gamma$ and $T'  \subset \Gamma'$ be fundamental domains
    containing $\tilde v$ and $\tilde v'$ respectively
    and let $\mathcal{G}$ and $\mathcal{G}'$ be the quotient graphs of groups
    associated to $T$ and $T'$ respectively.
    Each edge $e \in G$ and $e' \in G'$
    has a single preimage $\tilde e \in T$ and $\tilde e' \in T'$ respectively.
    The groups $\mathcal{G}_{e}$ and $\mathcal{G}'_{e'}$  are  the stabilizers
    of $\tilde e$ and $\tilde e'$ respectively.
    Each vertex $v \in G$ and $v' \in G'$ has a preferred preimage
    $\tilde v \in T$ and $\tilde v' \in T'$ respectively.
    The groups $\mathcal{G}_v$ and $\mathcal{G}'_{v'}$ are the stabilizers
    of $\tilde v$ and $\tilde  v'$ respectively.
    For each oriented edge $e$ of $G$ and $e'$ of $G'$
    there are elements $g_e \in F$ and $g'_{e'} \in F'$
    such that the monomorphisms $\iota_e$ and $\iota_{e'}$ are the restrictions of the maps
    \[  h \mapsto g_ehg_e^{-1} \quad\text{and}\quad h' \mapsto g'_{e'}h'g'^{-1}_{e'} \]
    to $\mathcal{G}_e$ and $\mathcal{G}'_{e'}$ respectively.

    Let $v$ be a vertex of $G$ and write $w = f(v)$.
    To define $f_v\colon \mathcal{G}_v \to \mathcal{G}'_w$,
    note that the stabilizers of $\tilde f(\tilde v)$ and $\tilde w$ are conjugate
    in $F'$ by some element $t_w$ such that $t_w.\tilde f(\tilde v) = \tilde w$.
    The restriction of
    \[  h \mapsto t_w\Phi(h)t_w^{-1} \]
    to $\mathcal{G}_v$ defines a homomorphism $f_v\colon \mathcal{G}_v \to \mathcal{G}'_{w}$.

    Let $e$ be an oriented edge of $G$ with initial vertex  $v$ and terminal vertex $w$
    and suppose first that $f(e)$ is a vertex.
    Then for $h \in \mathcal{G}_e$, we have
    \[  f_v\iota_{\bar e}(h) = t_{f(v)}\Phi(g_{\bar e}hg_{\bar e}^{-1})t_{f(v)}^{-1}  \]
    and
    \[  f_w\iota_{e}(h) = t_{f(w)}\Phi(g_ehg_{e}^{-1})t_{f(w)}^{-1}. \]
    Therefore the compatibility conditions force us to define $f(e) = g_0$,
    where
    \[
        g_0 = t_{f(v)}\Phi(g_{\bar e}g_{e}^{-1})t_{f(w)}^{-1}.
    \]
    One checks that $g_0$ belongs to $\mathcal{G}_{f(v)} = \mathcal{G}_{f(w)}$.
    
    Now suppose that $\tilde f(\tilde e) = \tilde E'_1\ldots \tilde E'_k$.
    $\Phi$-twisted equivariance implies that if $h \in \mathcal{G}_e$,
    then $\Phi(h)$ stabilizes $\tilde E'_i$ for $1 \le i \le k$.
    Suppose $\pi'(\tilde E'_i) = e'_i$ in $G'$.
    These stabilizers are conjugate to $\mathcal{G}'_{e'_i}$
    by elements $t_{e'_i} \in F'$ such  that $t_{e'_i}.\tilde E'_i = \tilde e'_i$.
    The restriction of
    \[  h \mapsto t_{e'_i}\Phi(h)t_{e'_i}^{-1} \]
    to $\mathcal{G}_e$ defines a homomorphism  $f_{e,e'_i}\colon \mathcal{G}_e \to  \mathcal{G}'_{e'_i}$.
    Given $h \in \mathcal{G}_e$, we have
    \[  f_v\iota_{\bar e}(h) = t_{f(v)}\Phi(g_{\bar e}hg_{\bar e}^{-1})t_{f(v)}^{-1}. \]
    On the other hand, we have
    \[  \iota_{\bar e'_1}f_{e,e'_1}(h) = g'_{\bar e'_1}t_{e'_1}\Phi(h)t_{e'_1}^{-1}g_{\bar e'_1}^{-1}. \]
    The compatibility conditions force us to define
    \[  g_0 = t_{f(v)}\Phi(g_{\bar e})t_{e'_1}^{-1}g'^{-1}_{\bar e'_1}. \]
    One checks that we have $g_0 \in \mathcal{G}_{f(v)}$.
    The derivations of the other vertex  group elements are similar.
    The additional claims are straightforward to check; we leave them to the reader.
\end{proof}

Given a vertex $v \in G$, let $\st(v)$ denote the set of oriented edges $e$ of $G$
with initial vertex $v$.
Recall that for each lift $\tilde v \in \pi^{-1}(v)$
there is a $\mathcal{G}_v$-equivariant bijection
\[  \st(\tilde v) = \coprod_{e \in \st(v)} \mathcal{G}_v/\iota_e(\mathcal{G}_e) \times \{e\}. \]

Suppose $f\colon \mathcal{G} \to \mathcal{G}$ is a homotopy equivalence.
It defines an outer automorphism of $\pi_1(\mathcal{G})$.
Choosing a basepoint $p$ in $G$ and an edge path $\sigma$ in $\mathcal{G}$ from $p$ to $q = f(p)$
defines an automorphism $f_\sharp \colon \pi_1(\mathcal{G},p) \to \pi_1(\mathcal{G},p)$
defined as
\[  f_\sharp([\gamma]) = [\sigma f(\gamma)\bar\sigma]. \]
Let $(\Gamma,\tilde p)$ be the Bass--Serre tree covering $(\mathcal{G},p)$, and
let $\tilde q$ be the endpoint of the lift of $\sigma$ to $\Gamma$ beginning at $\tilde p$.
By \Cref{liftingtotheuniversalcover}, there is an $f_\sharp$-twisted equivariant map
$\tilde f\colon (\Gamma,\tilde p) \to (\Gamma,\tilde q)$
taking the point $\tilde x = [\gamma]$ 
(or $\tilde x = [\gamma]\mathcal{G}_x$ if $x$ is a vertex)
to the point $\tilde f(\tilde x) = [\sigma f(\gamma)]$ 
(or $\tilde f(\tilde  x) = [\sigma f(\tilde \gamma)]\mathcal{G}_{f(x)}$ if $f(x)$ is a vertex).

Conversely, we saw earlier in the definition of a map of graphs of groups
and \Cref{takingthequotient}
that any $\Phi$-twisted equivariant map $\tilde f\colon \Gamma \to \Gamma'$
is equivariantly homotopic to a map which projects to a map of graphs of groups.
It is not quite true that if $\Phi$ is an automorphism, then $\tilde f$ projects to a homotopy equivalence:
for that one needs the existence of a $\Phi$-twisted equivariant homotopy inverse for $\tilde f$.

\paragraph{Topological representatives.}
Recall that we defined
a \emph{topological representative}
to be a homotopy equivalence $f\colon \mathcal{G} \to \mathcal{G}$
which is a morphism and which sends edges to nontrivial tight edge paths.
In the following sections, we prove \Cref{relativetraintrack} 
by performing a number of operations on topological representatives.

Given a graph of groups $\mathcal{G}$,
the collection of outer automorphisms of $F = \pi_1(\mathcal{G})$
that admit a topological representative $f\colon \mathcal{G} \to \mathcal{G}$
forms a subgroup of $\out(F)$.
It is precisely the subgroup of $\out(F)$ leaving invariant the deformation space $\mathscr{D}$
to which the Bass--Serre tree of $\mathcal{G}$ belongs.
We suggest the name \emph{modular group} or \emph{mapping class group} of $\mathcal{G}$
for this group, and in this paper we will write it as $\Mod(\mathscr{D})$ or $\Mod(\mathcal{G})$.
An outer automorphism $\varphi \in \out(F)$ belongs to the ``modular group'' of $\mathcal{G}$
if for any conjugacy class $[g]$ in $F$, the conjugacy class $\varphi([g])$
is elliptic in the Bass--Serre tree for $\mathcal{G}$ if and only if $[g]$ is elliptic.
In some cases this subgroup is all of $\out(F)$.
One case where this happens is when $F$ is virtually free
and vertex groups of $\mathcal{G}$ are finite.
Another is when $F$ is a \emph{generalized Baumslag--Solitar group}
and vertex and edge groups of $\mathcal{G}$ are infinite cyclic.

Another case where this ``modular group'' is all of $\out(F)$
is when $F = A_1*\dotsb*A_n*F_k$ is the Grushko decomposition of a finitely generated group.
That is, the $A_i$ are freely indecomposable and not infinite cyclic
and $F_k$ is a free group of rank $k$.
The graph of groups $\mathcal{G}$ can 
be any graph of groups with trivial edge groups,
vertex groups the $A_i$ and ordinary fundamental group free of rank $k$.
For example, $\mathcal{G}$ may be the \emph{thistle with $n$ prickles and $k$ petals.}
This is a graph of groups with one vertex $\star$ with trivial vertex group,
$n$ vertices with vertex group each of the $A_i$, and $n+k$ edges.
The first $n$ edges connect vertices with nontrivial vertex group to $\star$,
and the remaining $k$ edges form loops based at $\star$.

\begin{ex}
    \label{irreducibleexample}
    Consider
    \[  F = C_2*C_2*C_2*C_2 = \langle a,b,c,d \mid a^2 = b^2 = c^2 = d^2 = 1\rangle \]
    the free product of four copies of the cyclic group of order two.
    Let $\Phi\colon F \to F$ be the automorphism
    \[ \Phi \begin{dcases}
        a \mapsto b \\
        b \mapsto c \\
        c \mapsto d \\
        d \mapsto cbdadbc.
    \end{dcases} \]
    (Notice that, e.g.~$c^{-1}= c$.)
    A topological representative $f\colon \mathcal{G} \to \mathcal{G}$ of $\Phi$
    on the thistle with four prickles is depicted in \Cref{traintrackfig1}.
    The maps on vertex groups are the unique isomorphisms.
    \begin{figure}[ht]
        \[ 
        \begin{tikzpicture}[auto, node distance = 4cm, baseline]
            \node[pt, star, scale = 1.5] (v) at (0,0) {};
            \node[pt, "$\langle a \rangle$" left] (a) [above left of = v] {}
                edge[->-, "$e_1$"] (v);
            \node[pt, "$\langle b\rangle$" right] (b) [above right of = v] {}
                edge[->-, "$e_2$"] (v);
            \node[pt, "$\langle c\rangle$" right] (c) [below right of = v] {}
                edge[->-, "$e_3$"] (v);
            \node[pt, "$\langle d \rangle$" left] (d) [below left of = v] {}
                edge[->-, "$e_4$"] (v);
        \end{tikzpicture}
        \qquad
        f \begin{dcases}
            e_1 \mapsto e_2 \\
            e_2 \mapsto e_3 \\
            e_3 \mapsto e_4 \\
            e_4 \mapsto e_1\bar e_4de_4\bar e_2be_2\bar e_3ce_3
        \end{dcases}
        \]
        \caption{The topological representative $f\colon \mathcal{G} \to \mathcal{G}$.}
        \label{traintrackfig1}
    \end{figure}
\end{ex}

\section{Train Track Maps}
\label{traintracksection}
The purpose of this section is to prove the irreducible case of \Cref{relativetraintrack}.
The strategy is a straightforward adaptation of the arguments of \cite[Section 1]{BestvinaHandel} to graphs of groups.
At the end of the section we prove a proposition characterizing irreducibility
for outer automorphisms of free products.

\paragraph{}
Fix once and for all a graph of groups $\mathbb{G}$.
A \emph{marked graph of groups} is a graph of groups $\mathcal{G}$ together
with a homotopy equivalence $\sigma\colon \mathbb{G} \to \mathcal{G}$.
In the language of \cite{GuirardelLevittDeformation},
the marking keeps track not only of the fundamental group of $\mathcal{G}$,
but constrains the \emph{deformation space} to which it belongs.
We will assume that $\mathbb{G}$ is \emph{reduced} in the sense of \cite{Forester},
i.e.~there is no homotopy equivalence that collapses an edge of $\mathbb{G}$.

Given a topological representative $f\colon \mathcal{G} \to \mathcal{G}$
and an ordering $e_1,\dotsc,e_m$ of the edges of $G$,
there is an associated $m\times m$ \emph{transition matrix} $M$
with $ij$th entry counting the number of times 
the $f$-image of the $j$th edge crosses the $i$th edge in either direction.
The map $f$ is \emph{irreducible} if the matrix $M$ is irreducible.
Recall that a matrix is \emph{irreducible} if for $1 \le i,j \le m$,
there is an integer $\ell$ such that the $ij$th entry of $M^\ell$ is positive.
Associated to every irreducible matrix is its \emph{Perron--Frobenius eigenvalue} $\lambda \ge 1$.
An irreducible matrix with Perron--Frobenius eigenvalue $\lambda = 1$ is a transitive permutation matrix.
The transition matrix of \Cref{irreducibleexample} is
\[ \begin{pmatrix}
	0 & 0 & 0 & 1 \\
	1 & 0 & 0 & 2 \\
	0 & 1 & 0 & 2 \\
	0 & 0 & 1 & 2 
\end{pmatrix}, \]
which is irreducible and
for which the Perron--Frobenius eigenvalue is 
the largest real root of the polynomial $x^4 - 2x^3 - 2x^2 - 2x - 1$
and satisfies $\lambda \approx 2.948$.

Call a vertex $v$ of $\mathcal{G}$ \emph{inessential} if for some oriented edge $e$ with terminal vertex $v$
the homomorphism $\iota_e\colon\mathcal{G}_e \to \mathcal{G}_v$ is surjective.

A subgraph $G_0$ of $G$ is \emph{invariant} with respect 
to a map $f\colon \mathcal{G} \to \mathcal{G}$
if $f(G_0) \subset G_0$. It is a \emph{forest}
if each component $C$ of $G_0$ is a tree and in the induced graph of groups structure
we have that $\pi_1(\mathcal{G}|_C)$ acts with global fixed point on its Bass--Serre tree.
In $C$, this means there is a choice of vertex $v$ in $C$ and an orientation
of each edge $e$ of $C$ toward this vertex such that 
each homomorphism $\iota_{\bar e}$ away from $v$ is surjective.
A forest is \emph{nontrivial} if it contains at least one edge.
An outer automorphism $\varphi \in \out(\pi_1(\mathcal{G}))$ is \emph{irreducible}
if it admits a topological representative $f\colon \mathcal{G} \to \mathcal{G}$
(i.e.~we have $\varphi \in \Mod(\mathcal{G})$)
and if whenever $\mathcal{G}$ has no inessential valence-one vertices and no nontrivial invariant forests,
then the topological representative $f$ is irreducible.

A homotopy equivalence $f\colon \mathcal{G} \to \mathcal{G}$ (taking vertices to vertices) is \emph{tight} 
if for each edge $e$, either $f(e)$ is a tight edge path, or $f(e)$ is a vertex.
A homotopy equivalence may be \emph{tightened} to a tight homotopy equivalence
by a homotopy relative to the vertices of $G$.
In the language of the previous section, in other words,
the homotopy only involves the first operation in the definition of homotopy of maps of graphs of groups.

Suppose $f\colon \mathcal{G} \to \mathcal{G}$ is a tight homotopy equivalence.
A forest $G_0 \subset \mathcal{G}$ is \emph{pretrivial} if 
each edge in the forest is eventually mapped to a point.
\emph{Maximal} (with respect to inclusion) pretrivial forests are in particular invariant.

\begin{lem}[\cite{BestvinaHandel} p.~7] 
	If $f\colon \mathcal{G} \to \mathcal{G}$
	is a tight homotopy equivalence,
	collapsing a maximal pretrivial forest in $\mathcal{G}$
	produces a topological representative $f'\colon \mathcal{G}' \to \mathcal{G}'$.
	If instead $f\colon \mathcal{G} \to \mathcal{G}$
	is a topological representative of an irreducible outer automorphism
	and $G$ has no inessential valence-one vertices,
	collapsing a maximal invariant forest yields an irreducible topological representative
	$f'\colon\mathcal{G}' \to \mathcal{G}'$.
\end{lem}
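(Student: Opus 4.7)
The overall plan is to follow the graph-theoretic argument of Bestvina--Handel, interpreting the collapse as a morphism $q\colon\mathcal{G}\to\mathcal{G}'$ in the sense of \Cref{morphisms} and transferring data across $q$. Because each component of the forest $G_0$ has, as a graph of groups, fundamental group acting with global fixed point on its Bass--Serre tree, collapsing it is a homotopy equivalence, with a new vertex inheriting the appropriate vertex group from the component. In the first case $f$ sends every edge of $G_0$ to a vertex, and in the second $f(G_0)\subseteq G_0$; so in either case there is an induced map $f'\colon\mathcal{G}'\to\mathcal{G}'$ with $qf\simeq f'q$ that is again a homotopy equivalence and sends vertices to vertices.

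For the first statement it remains to check that $f'$ sends each edge of $\mathcal{G}'$ to a nontrivial tight edge path. Tightness of $f'(e)$ for $e\notin G_0$ follows from that of $f(e)$: any cancellation $a\bar a$ appearing in $f'(e)=q(f(e))$ must come from a subword $a\gamma\bar a$ of $f(e)$ with $\gamma$ supported in $G_0$, and since $G_0$ is a forest such a $\gamma$ is already null-homotopic in $\mathcal{G}$, contradicting tightness of $f(e)$. Nontriviality follows from maximality of $G_0$: if $f(e)\subseteq G_0$ then $\{e\}\cup G_0$ is still pretrivial, and if this enlargement fails to be a forest, then a loop consisting of $e$ and an arc of $G_0$ would be sent by $f$ into $G_0$, contradicting that $f$ is a homotopy equivalence.

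For the second statement the same argument shows that $f'$ is a topological representative, so the issue is irreducibility. I plan to invoke the definition of irreducibility for $\varphi$ directly, which reduces the problem to verifying that $\mathcal{G}'$ has neither inessential valence-one vertices nor nontrivial invariant forests. A nontrivial invariant forest $H'\subseteq G'$ would pull back to an invariant subgraph $q^{-1}(H')\supsetneq G_0$ of $\mathcal{G}$ which is again a forest, its components being obtained from those of $H'$ by attaching collapsed components of $G_0$, each itself a forest; this contradicts maximality. An inessential valence-one vertex of $\mathcal{G}'$ likewise lifts to an invariant edge or arc that can be adjoined to $G_0$, where the hypothesis that $\mathcal{G}$ has no inessential valence-one vertex rules out the trivial case in which no collapse has taken place. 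The main obstacle in all of this is the forest bookkeeping under collapse; the morphism formalism of \Cref{morphisms} is what keeps the vertex-group data visible throughout, so that the pullback of a forest really is a forest and the pretrivial/invariant condition is preserved under the enlargement steps used to derive the required contradictions.
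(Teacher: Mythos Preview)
Your overall strategy—build the quotient morphism $q$, push $f$ through, then verify tightness, nontriviality, and the two conditions in the definition of irreducibility—matches the paper's. The paper's own proof is terse: it describes the collapse, identifies that a subword $e\sigma\bar e$ of $f(e')$ with $\sigma\subset G_0$ becomes $eg\bar e$ for some vertex-group element $g$, and records the effect on the transition matrix. Your write-up attempts to say more, but two of the added steps contain genuine errors.

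\textbf{Tightness.} The sentence ``since $G_0$ is a forest such a $\gamma$ is already null-homotopic in $\mathcal G$'' is false. A closed tight path in a graph-of-groups forest represents an element of the root vertex group of its component, and that element need not be trivial. What the paper uses is that a tight closed path $\sigma$ based at $\tau(a)$ in the tree $C$ necessarily has the form $\sigma' g\bar\sigma'$ for some path $\sigma'$ in $C$ and some $g$ in a vertex group; after collapse $a\sigma\bar a$ becomes $ag\bar a$. Tightness of the original $a\sigma\bar a$ (equivalently, embeddedness of its lift in the Bass--Serre tree) forces $g\notin\iota_a(\mathcal G_a)$, so $ag\bar a$ is still tight. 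Your contradiction should be ``$a\gamma\bar a$ was not tight,'' not ``$\gamma$ was null-homotopic.''

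\textbf{Inessential valence-one vertices.} Your claim that such a vertex of $\mathcal G'$ ``lifts to an invariant edge or arc that can be adjoined to $G_0$'' does not work: the incident edge has no reason to be $f$-invariant, so you cannot enlarge the maximal invariant forest this way. The correct argument is direct. If a collapsed component $C$ gives a valence-one vertex $v'$ of $\mathcal G'$, then exactly one edge of $G\setminus G_0$ meets $C$; any leaf of the tree $C$ not meeting that edge is then valence-one in $G$, and the forest orientation makes it inessential, contradicting the hypothesis on $\mathcal G$. Vertices of $\mathcal G'$ not arising from a collapsed component inherit their link from $\mathcal G$ unchanged, so they cannot become inessential valence-one either.
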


\begin{proof}
	We describe how to collapse invariant forests.

	If $f\colon\mathcal{G} \to \mathcal{G}$ is a tight homotopy equivalence
	and $G_0 \subset G$ is an invariant forest,
	define $\mathcal{G}_1 = \mathcal{G}/G_0$ to be the quotient graph of groups 
	obtained by collapsing each component $C$ of $G_0$ to a vertex.
	The vertex group of the vertex determined by $C$ is the fundamental group $\pi_1(\mathcal{G}|_C,p_C)$
    with respect to some basepoint $p_C \in C$.
	Since $G_0$ is a forest, this fundamental group is equal to some vertex group in $C$.
    Choose $p_C$ equal to that vertex; choose arbitrarily if there are multiple choices.
    Given a vertex $v$ of $C$, let $\gamma_v$ be unique tight path without vertex group elements
    from $v$ to $p_C$.
    If $v$ does not belong to any component of $G_0$,
    let $\gamma_v$ be the trivial path (without vertex group elements).
	Let $\pi\colon \mathcal{G} \to \mathcal{G}_1$ be the quotient map.
    It is a collapse map of graphs of groups.

    Each edge $e$ of $G_1$ has a unique preimage in $G$; abusing notation, call it $e$ as well.
    Define $f_1(e) = \pi f(\bar\gamma_{\tau(\bar e)}e\gamma_{\tau(e)})$.
    If $v$ in $G_1$ is a vertex, then $v$ either corresponds to a unique vertex of $G$,
    call it $v$ as well,
    or to a component $C$ of $G_0$.
    In the former case define $(f_1)_v = f_v$.
    Suppose in the latter case that $f$ maps the component $C$ to $C'$.
    Let $\gamma_{p_C'}$ be the unique tight path in $C'$ without vertex group elements
    from $f(p_C)$ to $p_C'$;
    this determines a map 
    $f_\sharp\colon \pi_1(\mathcal{G}|_C,p_C) \to \pi_1(\mathcal{G}|_{C'},p_{C'})$;
    this is the map $(f_1)_v$ in this case.

	If $e \subset G$ is an edge not in $G_0$, 
	then the edge path for $f_1(e)$ is obtained from $f(e)$ by deleting all occurrences of edges in $G_0$
    and possibly adding vertex group elements at the ends.
	Since $f$ was tight, if $e\sigma\bar e$ is a subpath of the $f$-image of some edge $e'$ not in $G_0$,
	where $\sigma$ is a nontrivial path in $G_0$,
	then $\sigma$ must be homotopic to a path 
    of the form $\sigma'g\bar\sigma'$ for some path $\sigma'$ in $G_0$
    without vertex group elements
	and $g$ an element of some vertex group.
	In $f_1(e')$, the path $e\sigma\bar e$ is replaced by $eg\bar e$.
	This implies that $f_1\colon\mathcal{G}_1 \to \mathcal{G}_1$ is tight.
    Moreover, if $G_0$ was a maximal pretrivial forest,
    then $f_1$ is a topological representative.
    
    If instead $f$ was a topological representative of an irreducible outer automorphism, then
	the transition matrix for $f_1\colon\mathcal{G}_1\to \mathcal{G}_1$ 
    is obtained from the transition matrix
	for $f\colon\mathcal{G} \to \mathcal{G}$ 
    by deleting the rows and columns associated to the edges of $G_0$.
    If in this latter case $G$ had no inessential valence-one vertices,
    then neither does $G_1$,
    and since $G_0$ was a maximal invariant forest,
    $G_1$ has no invariant forests; therefore $f\colon \mathcal{G}_1 \to \mathcal{G}_1$
    must be irreducible.
\end{proof}

Recall we write $\st(v)$ for the set of oriented edges $e$ with initial vertex $v$.
A \emph{direction} at $v$ is an element of the set
\[ \coprod_{e \in \st(v)}\mathcal{G}_v/\iota_e(\mathcal{G}_e)\times \{ e\}. \]
A \emph{turn} at $v$ is a pair of directions at $v$.
If $f\colon \mathcal{G} \to \mathcal{G}$ is a topological representative,
$f$ determines a map $Df$ on directions
sending a direction based at $v$ to a direction based at $f(v)$ via the rule
\[ ([g],e) \mapsto ([f_{v}(g)g_0],e_1), \]
where the edge path $f(e)$ begins with $g_0e_1$.
The compatibility condition ensures that this map is well-defined; we have
\[ f_v(g\iota_e(h))g_0 = f_v(g)f_v(\iota_e(h))g_0 = f_v(g)g_0\iota_{\bar e_1}(f_{e,e_1}(h)). \]
The map $Df$ induces a map on turns, which we also denote by $Df$.
In \Cref{irreducibleexample}, the vertex $\star$ is mapped to itself by $f$;
the restriction of $Df$ to $\star$ is determined by the dynamical system
$\bar e_1 \mapsto \bar e_2 \mapsto \bar e_3 \leftrightarrow \bar e_4$.

A turn is \emph{degenerate} if it consists of a pair of identical elements and is \emph{nondegenerate} otherwise.
A turn is \emph{illegal} with respect to a topological representative $f\colon \mathcal{G} \to \mathcal{G}$
if its image under some iterate of $Df$ is degenerate and is \emph{legal} otherwise.
In \Cref{irreducibleexample}, a turn $\{\bar e_i,\bar e_j\}$ 
based at $\star$ is illegal if $i$ and $j$ are equal mod $2$,
and is legal otherwise.

Consider the edge path
\[ \gamma = g_1e_1g_2e_2\ldots e_kg_{k+1}. \]
We say $\gamma$ \emph{takes} the turns $\{([1],\bar e_i),([g_{i+1}],e_{i+1})\}$.
The path $\gamma$ is \emph{legal} if it takes only legal turns.

A topological representative $f\colon \mathcal{G} \to \mathcal{G}$ is a \emph{train track map}
if $f(e)$ is a legal path for each edge $e$ of $\Gamma$.
Equivalently, $f$ is a train track map if for each $k \ge 1$
and each edge $e$ of $\Gamma$, we have that
$f^k(e)$ is a tight edge path.
In \Cref{irreducibleexample}, $f$ is not a train track map
because the image of $e_4$ takes the illegal turn $\{\bar e_4,\bar e_2\}$.

\paragraph{\Cref{irreducibleexample} Continued.}
Let us fold $f$ at the illegal turn $\{\bar e_2,\bar e_4\}$.
To do this, first subdivide $e_4$ at the preimage of the vertex with vertex group $\langle c\rangle$
so $e_4$ becomes the edge path $e'_4e''_4$ and identify $e''_4$ with $e_2$.
The action of the resulting map $f'\colon \mathcal{G}_1 \to \mathcal{G}_1$
is obtained from $f$ by replacing instances of $e_4$ with $e'_4e_2$.
Thus we have
\[ f'(e_4) = e_1\bar e_2\bar e_4'de'_4e_2\bar e_2be_2\bar e_3c. \]
Tighten $f'$ by a homotopy with support on $e'_4$ to remove $e_2\bar e_2$,
yielding an irreducible topological representative $f_1\colon\mathcal{G}_1 \to \mathcal{G}_1$.
See \Cref{traintrackfig2}.

\begin{figure}[ht]
	\begin{center}
		\[
			\begin{tikzpicture}[auto, node distance = 4cm, baseline]
				\node[pt, star, scale = 1.5] (v) at (0,0) {};
				\node[pt, "$\langle a\rangle$" left] (a) [above of = v] {}
					edge[->-, "$e_1$"] (v);
				\node[pt, "$\langle b\rangle$" above] (b) [right of = v] {}
					edge[->-, "$e_2$"] (v);
				\node[pt, "$\langle c\rangle$" left] (c) [below of = v] {}
					edge[->-, "$e_3$"] (v);
				\node[pt, "$\langle d\rangle$" above] (d) [right of = b] {}
					edge[->-, "$e'_4$"] (b);
			\end{tikzpicture}
			\qquad
			f_1\begin{dcases}
				e_1 \mapsto e_2 \\
				e_2 \mapsto e_3\\
				e_3 \mapsto e'_4e_2 \\
				e'_4 \mapsto e_1\bar e_2\bar e'_4 d e'_4 b e_2\bar e_3 c
			\end{dcases}
		\]
		\caption{The topological representative $f_1\colon\mathcal{G}_1 \to \mathcal{G}_1$.}
		\label{traintrackfig2}
	\end{center}
\end{figure}

The Perron--Frobenius eigenvalue $\lambda_1$ for $f_1\colon \mathcal{G}_1 \to \mathcal{G}_1$
is the largest real root of the polynomial $x^4 - 2x^3 - 2x^2 + x - 1$ and
satisfies $\lambda_1 \approx 2.663$; thus $\lambda_1 < \lambda$.
However, $f_1$ is still not a train track map: $Df_1$ sends the turn
$\{(1,\bar e'_4),(b,e_2)\}$, which is crossed by $f_1(e'_4)$,
to $\{(c,e_3),(c,e_3)\}$; thus this turn is illegal.
We cannot quite fold $e_2$ and the end of $\bar e'_4$ 
because the $f_1$-image of the latter ends with $\bar e_3c$.
Lifting to the Bass--Serre tree $\tilde f_1 \colon \Gamma_1 \to \Gamma_1$, 
it is not the edge $\tilde e'_4$ which is folded with $\tilde e_2$ but $b.\tilde e'_4$.
We may remedy the situation by changing the fundamental domain in $\Gamma_1$,
or equivalently by changing the marking on $\mathcal{G}_1$ by twisting the edge $e'_4$ by $b^{-1} = b$.
This replaces $\langle d\rangle$ with $\langle bdb\rangle$, replaces $f_1(e_3)$ with $e'_4be_2$
and replaces $f_1(e'_4)$ with $e_1\bar e_2b\bar e'_4de'_4e_2\bar e_3$.
Then we fold $e'_4$ and $\bar e_2$.
The resulting graph of groups $\mathcal{G}_2$ 
is abstractly isomorphic to our original graph of groups $\mathcal{G}$,
but the marking differs.
The action of the resulting map $f''\colon \mathcal{G}_2 \to \mathcal{G}_2$
on edges is obtained by replacing instances of $e'_4$ with $e''_4\bar e_2$.
Thus we have
\[ f''(e''_4) = e_1\bar e_2 be_2e''_4bdbe''_4\bar e_2e_2, \]
and we may tighten to produce an irreducible topological representative
$f_2\colon \mathcal{G}_2 \to \mathcal{G}_2$. See \Cref{traintrackfig3}.
 \begin{figure}[ht]
	 \begin{center}
		 \[
			 \begin{tikzpicture}[auto, node distance = 4cm, baseline]
				 \node[pt, star, scale = 1.5] (v) at (0,0) {};
				 \node[pt, "$\langle a \rangle$" left] (a) [above left of = v] {}
					 edge[->-, "$e_1$"] (v);
				 \node[pt, "$\langle b\rangle$" right] (b) [above right of = v] {}
					 edge[->-, "$e_2$"] (v);
				 \node[pt, "$\langle c \rangle$" left] (c) [below left of = v] {}
					 edge[->-, "$e_3$"] (v);
				 \node[pt, "$\langle bdb\rangle$" right] (d) [below right of = v] {}
					 edge[->-, "$e''_4$"] (v);
			 \end{tikzpicture}
			 \qquad
			 f_2\begin{dcases}
				 e_1 \mapsto e_2 \\
				 e_2 \mapsto e_3 \\
				 e_3 \mapsto e''_4\bar e_2be_2 \\
				 e_4 \mapsto e_1\bar e_2be_2\bar e''_4bdbe''_4
			 \end{dcases}
		 \]
		 \caption{The topological representative $f_2\colon \mathcal{G}_2 \to \mathcal{G}_2$.}
		 \label{traintrackfig3}
	 \end{center}
 \end{figure}
The Perron--Frobenius eigenvalue $\lambda_2$ is the largest real root of $x^4 - 2x^3 - 2x^2 + 2x - 1$
and satisfies $\lambda_2 \approx 2.539$; thus $\lambda_2 < \lambda_1$.
The restriction of $Df_2$ to turns incident to $\star$ is determined by the dynamical system
$\bar e_1 \mapsto \bar e_2 \leftrightarrow \bar e_3$, $\bar e_4 \mapsto \bar e_4$.
The only illegal turn in $\mathcal{G}_2$ is $\{\bar e_1,\bar e_3\}$,
which is not crossed by the $f_2$-image of any edge, so $f_2\colon \mathcal{G}_2 \to \mathcal{G}_2$
is a train track map.

A subgroup $H$ of $\pi_1(\mathbb{G},p)$ is a \emph{generalized edge group}
if there exist edges $e$ and $e'$ of $G$ with terminal vertices $v$ and $v'$
and paths $\sigma$ from $p$ to $v$ and $\sigma'$ from  $p$ to $v'$
such that each element of $H$ may be represented by a loop of the form $\sigma\iota_e(h)\bar\sigma$
for $h \in \mathbb{G}_e$
and if $H$ contains all elements of the form $\sigma'\iota_{e'}(h')\bar\sigma'$
for $h' \in \mathbb{G}_{e'}$.
In the language of the action of $\pi_1(\mathbb{G},p)$ on the Bass--Serre tree $\Gamma$,
the first condition says that there is some edge $\tilde e$ lifting $e$
such that $H$ is contained in the stabilizer of $\tilde e$,
while the second says that there is some edge $\tilde e'$ lifting $e'$
such that $H$ contains the stabilizer of $\tilde e'$.
All reduced graphs of groups homotopy equivalent to $\mathbb{G}$
have the same generalized edge groups by \cite[Proposition 4.6]{GuirardelLevittDeformation},
and by its proof,
we have that any element $\varphi \in \Mod(\mathbb{G})$
permutes the conjugacy classes of generalized edge groups in $\pi_1(\mathbb{G})$.

An edge $e$ of a graph of groups $\mathcal{G}$ is \emph{surviving}
if there is a collapse map $\mathcal{G} \to \mathcal{G}'$
with $\mathcal{G}'$ reduced such that $e$ is not collapsed.
Suppose the Bass--Serre tree of $\mathcal{G}$ belongs to a deformation space $\mathscr{D}$.
Guirardel--Levitt consider a space $P\mathscr{G}$ of trees
all of whose edges are surviving
and prove \cite[Theorem 7.6]{GuirardelLevittDeformation}
that if $\mathscr{D}$ is what is called \emph{non-ascending,}
then $P\mathscr{G}$ is a finite-dimensional deformation retract 
of the projectivized deformation space $P\mathscr{D}$.
Clay \cite[Discussion after Lemma 1.11]{Clay} proves that
if we work in the \emph{weak topology} and $\mathcal{G}$ is \emph{irreducible}
then there is still a deformation retraction from $P\mathscr{D}$
to the \emph{spine} of $P\mathscr{G}$.
$\Mod(\mathscr{D}) = \Mod(\mathcal{G})$ acts on $P\mathscr{G}$ and its spine.
There are finitely many isomorphism types of graphs of groups $\mathcal{G}'$
homotopy equivalent to $\mathcal{G}$
all of whose edges are surviving
precisely when this action has finitely many orbits of cells.
For \emph{generalized Baumslag--Solitar groups}
where all vertex and edge groups are infinite cyclic,
Forester \cite[Theorem 8.2]{ForesterGBS} 
proved that this happens when there is no \emph{nontrivial integer modulus.}

The main result of this section is the following theorem.

\begin{thm}
	\label{traintrackthm}
    Suppose $\varphi \in \Mod(\mathbb{G})$ is irreducible
    and that one of the following conditions holds.
    \begin{enumerate}
        \item No generalized edge group of $\mathbb{G}$ is mapped properly
            into a conjugate of itself by some and hence any automorphism $\Phi$ representing $\varphi$.
        \item Edge groups of $\mathbb{G}$ have finite index in
            their incident vertex groups,
            i.e.~the Bass--Serre tree $\Gamma$ is locally finite
            and there are only finitely many isomorphism types of graphs of groups $\mathcal{G}'$
            homotopy equivalent to $\mathbb{G}$ all of whose edges are surviving.
    \end{enumerate}
    Then there exists a train track map $f\colon \mathcal{G} \to \mathcal{G}$ representing $\varphi$
    on a $\mathbb{G}$-marked graph of groups $\mathcal{G}$ with the appropriate property above.
\end{thm}

(The finite generation assumption is unnecessary in this section.)

The broad-strokes outline of the proof of \Cref{traintrackthm} is much the same as the previous example.
By folding at illegal turns, we often produce nontrivial tightening,
which decreases the Perron--Frobenius eigenvalue.
By controlling the presence of valence-one and valence-two vertices,
we may argue that the transition matrix lies in a finite set of matrices,
thus the Perron--Frobenius eigenvalue may only be decreased finitely many times.
In the remainder of this section,
we make this precise by recalling Bestvina and Handel's original analysis.
The proofs are largely identical to the original, so we omit them.

\paragraph{Subdivision.}
Given a topological representative $f\colon \mathcal{G} \to \mathcal{G}$,
if $p$ is a point in the interior of an edge $e$
such that $f(p)$ is a vertex, we may give $\mathcal{G}$ a new graph of groups structure
by declaring $p$ to be a vertex
with vertex group equal to $\mathcal{G}_e$.
If $f(e) = \gamma_1 g\gamma_2$ is the subdivision of the graph of groups edge path $f(e)$
at the image of the point $p$, where $g \in \mathcal{G}_{f(p)}$,
and the new edges incident to $p$ are $e_1$ and $\bar e_2$,
define for definiteness $f(e_1) = \gamma_1g$ and $f(e_2) = \gamma_2$.
The map $f_p \colon \mathcal{G}_p \to \mathcal{G}_{f(p)}$
is given by the following commutative diagram
\[  \begin{tikzcd}[column sep = large]
    & \mathcal{G}_e \ar[ld, "f_{e'}"'] \ar[rd, "f_{e''}"] & \\
    \mathcal{G}_{e'} \ar[r, "\iota_{e'}"'] & \mathcal{G}_{f(p)} & \mathcal{G}_{e''},
    \ar[l, "\operatorname{ad}(g)\iota_{\bar e''}"] \\
\end{tikzcd}\]
where $e'$ and $e''$ are the last edge of $\gamma_1$ and first edge of $\gamma_2$, respectively.
(Note that up to homotopy, we may factor $g \in \mathcal{G}_{f(p)}$ arbitrarily
as $g'g''$ and define $f(e_1) = \gamma_1 g'$ and $f(e_2) = g''\gamma_2$.)

\begin{lem}[Lemma 1.10 of \cite{BestvinaHandel}]
	If $f\colon \mathcal{G} \to \mathcal{G}$ is a topological representative
	and $f_1\colon \mathcal{G}_1 \to \mathcal{G}_1$
	is obtained by subdivision,
	then $f_1$ is a topological representative.
	If $f$ is irreducible, then $f_1$ is too,
	and the associated Perron--Frobenius eigenvalues are equal.\hfill\qedsymbol
\end{lem}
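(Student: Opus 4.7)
The plan is to first verify that $f_1$ is a topological representative, then use a direct combinatorial analysis to establish irreducibility, and finally use a Perron--Frobenius eigenvector of $M$ as a system of edge lengths to read off the eigenvalue equality.

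Suppose $p$ lies in the interior of $e = e_m$, subdividing it into $e'e''$ and giving the new vertex $p$ vertex group $\mathcal{G}_e$, with both edge-to-vertex inclusions at $p$ the identity map. This operation does not change the underlying Bass--Serre tree, only its simplicial structure, so $f_1$ remains a homotopy equivalence; since $f(p)$ is assumed to be a vertex of $\mathcal{G}$, $f_1$ still sends vertices to vertices. For any original edge $e_j \neq e$, the image $f_1(e_j)$ is obtained from the tight path $f(e_j)$ by substituting $e'e''$ for each $e$ and $\bar e''\bar e'$ for each $\bar e$; this introduces no new cancellations, so $f_1(e_j)$ remains tight. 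On the new edges, $f_1(e') = \alpha$ and $f_1(e'') = \beta$, where $f(e) = \alpha\beta$, both tight and nontrivial because $p$ is interior to $e$.

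For irreducibility, I suppose for contradiction that there is a proper nonempty $f_1$-invariant edge subset $S$ of $G_1$ and extract from it a proper nonempty $f$-invariant edge subset of $G$. If $S$ contains both or neither of $e', e''$, then the set $T := \{i < m : e_i \in S\}$, with $m$ adjoined in the former case, is $f$-invariant because $f$-images are recovered from $f_1$-images by collapsing each $e'e''$ back to $e$. The delicate case is when $S \cap \{e', e''\}$ has exactly one element, say $e'$: then $e'' \notin S$, and any occurrence of $e$ or $\bar e$ in $\alpha$ would produce an $e''$ in $f_1(e') \subseteq S$, so $\alpha$ does not cross $e$; the same reasoning shows $f(e_j)$ does not cross $e$ for any $e_j \in S$ with $j < m$. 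Thus $T := \{i < m : e_i \in S\}$ is $f$-invariant, proper (missing $m$), and nonempty (since $\alpha$ is nontrivial and uses only edges in $T$), contradicting irreducibility of $M$.

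For the equality of eigenvalues, I interpret a positive left Perron--Frobenius eigenvector $\ell$ of $M$ as edge lengths on $G$ under which $f$ stretches each edge by a factor of $\lambda$. Define edge lengths on $G_1$ by $\ell_1(e_i) = \ell(e_i)$ for $i < m$, and $\ell_1(e') = |\alpha|_\ell/\lambda$, $\ell_1(e'') = |\beta|_\ell/\lambda$, where $|\cdot|_\ell$ denotes $\ell$-length of an edge path. Since $\alpha, \beta$ are nontrivial, $\ell_1$ is strictly positive, and $|\alpha|_\ell + |\beta|_\ell = \lambda\ell(e_m)$ gives $\ell_1(e') + \ell_1(e'') = \ell(e_m)$, so that the $\ell_1$-length of each edge in $G_1$ stretches by exactly $\lambda$ under $f_1$. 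Hence $\ell_1 M_1 = \lambda \ell_1$ with $\ell_1$ positive, which by Perron--Frobenius theory identifies $\lambda$ as the eigenvalue of the (now known irreducible) $M_1$. The hard part will be the asymmetric case of the irreducibility argument, where the substitution $e \mapsto e'e''$ couples the two new edges inside every $f_1$-image, forcing a strong constraint on which edges can appear in $\alpha$ once the symmetry between $e'$ and $e''$ in $S$ is broken.
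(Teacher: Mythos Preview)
Your argument is correct. The paper itself gives no proof of this lemma, simply citing \cite[Lemma 1.10]{BestvinaHandel} and remarking that the proofs of the elementary moves are identical to the originals; your write-up is exactly the kind of direct verification Bestvina--Handel give, adapted to the graph-of-groups setting, so there is nothing to compare beyond noting that you have filled in details the paper deliberately omits.

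Two small points worth tightening. In the ``both or neither'' case of the irreducibility argument you verify $f$-invariance of $T$ but do not explicitly check that $T$ is proper and nonempty; this is immediate (if $S$ omits some $e_j$ with $j<m$ then so does $T$, and if $S$ contains some $e_j$ with $j<m$ then so does $T$; the edge cases are ruled out by $S$ being proper nonempty in $G_1$), but it should be said. In the eigenvalue argument, the key observation making the stretch computation work is that $\ell_1(e')+\ell_1(e'')=\ell(e_m)$ forces $|\gamma|_{\ell_1}=|\gamma|_\ell$ for any path $\gamma$ in $G$ after performing the $e\mapsto e'e''$ substitution; you use this implicitly when asserting that old edges still stretch by $\lambda$, and it would be cleaner to state it once. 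The final paragraph of your proposal reads as commentary on the proof rather than proof itself and can be deleted.
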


\paragraph{Valence-One Homotopy.}
Recall that a valence-one vertex $v$ with incident edge $e$ is \emph{inessential}
if the monomorphism $\iota_e\colon\mathcal{G}_e \to \mathcal{G}_v$ is an isomorphism.

If $v$ is an inessential valence-one vertex with incident edge $e$,
let $\mathcal{G}_1$ denote the subgraph of groups determined by $G \setminus \{e,v\}$,
and let $\pi\colon \mathcal{G} \to \mathcal{G}_1$ be the map collapsing $e$.
Let $f_1\colon \mathcal{G}_1 \to \mathcal{G}_1$ be the topological representative
obtained from $\pi f|_{\mathcal{G}_1}$ by tightening and collapsing a maximal pretrivial forest.
We say that $f_1\colon \mathcal{G}_1 \to \mathcal{G}_1$ is obtained from $f\colon \mathcal{G} \to \mathcal{G}$
by a \emph{valence-one homotopy.}

\begin{lem}[Lemma 1.11 of \cite{BestvinaHandel}]
	If $f\colon \mathcal{G} \to \mathcal{G}$ is an irreducible topological representative
	with Perron--Frobenius eigenvalue $\lambda$ and $f_1\colon \mathcal{G}_1 \to \mathcal{G}_1$
	is obtained from $f\colon \mathcal{G} \to \mathcal{G}$
	by performing valence-one homotopies on all inessential valence-one vertices of $\mathcal{G}$
	followed by the collapse of a maximal invariant forest,
	then $f_1\colon \mathcal{G}_1 \to \mathcal{G}_1$ is irreducible,
	and the associated Perron--Frobenius eigenvalue $\lambda_1$ satisfies $\lambda_1 < \lambda$.
\end{lem}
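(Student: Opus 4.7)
The plan is to express the transition matrix $M_1$ of $f_1$ as entrywise bounded by a proper principal submatrix of the transition matrix $M$ of $f$, and then invoke the strict monotonicity of the Perron--Frobenius spectral radius under passage to proper principal submatrices of irreducible nonnegative matrices.

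First I would analyze a single valence-one homotopy. Let $v$ be an inessential valence-one vertex with incident edge $e$, and let $\pi\colon \mathcal{G} \to \mathcal{G}'$ collapse $e$; since $\iota_e\colon \mathcal{G}_e \to \mathcal{G}_v$ is an isomorphism, $\pi$ is a homotopy equivalence. For any edge $e' \neq e$, the image $f(e')$ is an edge path whose projection under $\pi$ replaces each occurrence of $e$ (or $\bar e$) by a vertex-group element. After tightening, subpaths of the form $e\sigma\bar e$ can cancel, so for $e'' \neq e$ the entry counting crossings of $e''$ by the $\pi$-tightened image of $e'$ is at most the original count $M(e'', e')$. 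Collapsing a maximal pretrivial forest then deletes further rows and columns. Iterating valence-one homotopies until no inessential valence-one vertices remain, and finally collapsing a maximal invariant forest (using the previous lemma), yields a topological representative $f_1\colon \mathcal{G}_1 \to \mathcal{G}_1$ whose transition matrix $M_1$ is entrywise bounded by a proper principal submatrix $M'$ of $M$.

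The irreducibility of $M_1$ is forced by hypothesis: $\mathcal{G}_1$ has no inessential valence-one vertices (we removed them) and no nontrivial invariant forests (we collapsed a maximal one), so since $\varphi$ is irreducible, $f_1$ must be irreducible. For the strict inequality I would appeal to the standard Perron--Frobenius fact that if $A$ is an irreducible nonnegative matrix and $A'$ is a proper principal submatrix, then the spectral radius satisfies $\rho(A') < \rho(A)$. Indeed, if $\rho(A') = \rho(A)$, then extending the Perron eigenvector of $A'$ by zeros would give a nonnegative eigenvector of $A$ failing to be strictly positive, contradicting the uniqueness up to scale of the strictly positive Perron eigenvector of an irreducible nonnegative matrix. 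Applying this with $A = M$ and $A' = M'$, together with the entrywise domination $M_1 \le M'$, gives $\lambda_1 = \rho(M_1) \le \rho(M') < \rho(M) = \lambda$.

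The main obstacle I anticipate is the bookkeeping around cascading valence-one vertices: collapsing one edge lowers the valence of the other endpoint, potentially creating further inessential valence-one vertices at intermediate stages. I would handle this by induction on the number of edges, using that each step strictly decreases the edge count and that entries of the transition matrix are nonincreasing under tightening, edge collapse, and pretrivial-forest collapse. A secondary but routine point is verifying that $\pi f|_{\mathcal{G}'}$, once tightened, really gives a topological representative --- this is essentially the content of the collapsing lemma proven just above, applied iteratively.
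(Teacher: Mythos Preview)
The paper does not supply its own proof of this lemma; it explicitly states that the proofs of the elementary-move lemmas are identical to the originals in \cite{BestvinaHandel} and omits them. Your approach is precisely the Bestvina--Handel argument: delete the row and column corresponding to the collapsed edge, observe that the new transition matrix is entrywise dominated by this proper principal submatrix, and invoke strict monotonicity of the Perron--Frobenius eigenvalue under passage to proper principal submatrices of an irreducible nonnegative matrix.

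One small imprecision worth fixing: extending the Perron eigenvector $v'$ of $A'$ by zeros to a vector $v$ does \emph{not} directly give an eigenvector of $A$, since for indices $i$ outside the submatrix one only has $(Av)_i \ge 0$, not $(Av)_i = 0$. The clean argument pairs with the positive \emph{left} Perron eigenvector $w$ of $A$: from the entrywise inequality $Av \ge \rho(A')v$ one obtains $\lambda\, w^{T}v = w^{T}Av \ge \rho(A')\, w^{T}v$, hence $\lambda \ge \rho(A')$; if equality held then $Av = \rho(A')v = \lambda v$, forcing $v$ to be a scalar multiple of the strictly positive Perron eigenvector of $A$, contradicting the zeros in $v$. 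Your conclusion is correct; only that one-line justification needs the tweak.
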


\paragraph{Valence-Two Homotopy.}
We likewise distinguish two kinds of valence-two vertices.
A valence-two vertex $v$ with incident edges $e_i$ and $e_j$ is \emph{inessential}
if at least one of the monomorphisms
$\iota_{e_i}\colon \mathcal{G}_{e_i} \to \mathcal{G}_v$ and
$\iota_{e_j}\colon \mathcal{G}_{e_j} \to \mathcal{G}_v$ is an isomorphism,
say $\iota_{e_j}\colon \mathcal{G}_{e_j} \to \mathcal{G}_v$.
Let $\pi$ be the map that collapses $e_j$ to a point and expands $e_i$ over $e_j$.
Define a map $f'\colon \mathcal{G} \to \mathcal{G}$ by tightening $\pi f$.
Observe that no vertex of $\mathcal{G}$ is mapped to $v$.
Thus we may define a new graph of groups structure $\mathcal{G}'$
by removing $v$ from the set of vertices.
Thus the edge path $e_i\bar e_j$ is now an edge,
which we will call $e_i$ with edge group $\mathcal{G}_{e_i}$.
Let $f''\colon \mathcal{G}' \to \mathcal{G}'$ be the map obtained by tightening
$f''(e_i)  = f'(e_i\bar e_j)$.
Finally, let $f_1 \colon \mathcal{G}_1 \to \mathcal{G}_1$
be the topological realization obtained by collapsing a maximal pretrivial forest.
We say that $f_1\colon \mathcal{G}_1 \to \mathcal{G}_1$ is obtained by a 
\emph{valence-two homotopy of $v$ across $e_j$.}

\begin{lem}[Lemma 1.13 of \cite{BestvinaHandel}]
	Let $f\colon \mathcal{G} \to \mathcal{G}$ be an irreducible topological representative,
	and suppose $\mathcal{G}$ has no inessential valence-one vertices.
	Suppose $f_2\colon \mathcal{G}_2 \to \mathcal{G}_2$ 
	is the irreducible topological representative obtained
	by performing a valence-two homotopy of $v$ across $e_j$
	followed by the collapse of a maximal invariant forest.
	Let $M$ be the transition matrix of $f$ and choose a 
	positive eigenvector $\vec w$ with $M\vec w = \lambda\vec w$.
	If $w_i \le w_j$, then $\lambda_2 \le \lambda$;
	if $w_i < w_j$, then $\lambda_2 < \lambda$.
\end{lem}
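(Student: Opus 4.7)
The plan is to adapt Bestvina and Handel's argument for their Lemma 1.12 \cite{BestvinaHandel}, interpreting the Perron--Frobenius eigenvector as a length function and tracking how lengths behave under the valence-two homotopy. I would begin by equipping $\mathcal{G}$ with the piecewise-linear metric in which the edge $e_k$ has length $w_k$, so that the identity $M\vec w = \lambda \vec w$ translates to the statement that the tight image $f(e_k)$ has length exactly $\lambda w_k$; in other words, $f$ uniformly stretches lengths by the factor $\lambda$. On the graph $\mathcal{G}'$ produced by the valence-two homotopy (before collapsing any invariant forest), I would assign the new edge $e'_i$, which represents the old path $e_i\bar e_j$, the length $w_i+w_j$, and keep all other edges at length $w_k$. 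Call this length vector $\vec w'$.

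The main computation is then to show that the transition matrix $M''$ of the intermediate representative $f''\colon\mathcal{G}'\to\mathcal{G}'$ satisfies $M''\vec w' \le \lambda \vec w'$ componentwise. For coordinates $l \ne i, j$: the hypothesis that no vertex of $\mathcal{G}$ is mapped to $v$ forces every occurrence of $e_i$ or $\bar e_j$ inside the tight path $f(e_l)$ to be paired in an adjacent block of the form $e_i g \bar e_j$ or $e_j g \bar e_i$ with $g \in \mathcal{G}_v = \iota_{e_j}(\mathcal{G}_{e_j})$; in $\mathcal{G}'$, each such block becomes a single occurrence of $e'_i$ with the intermediate group element absorbed into the edge group via the isomorphism $\iota_{e_j}$, which preserves the length. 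For the coordinate of $e'_i$ itself: the untightened concatenation $f(e_i)\overline{f(e_j)}$ has length $\lambda(w_i+w_j)$, and tightening in $\mathcal{G}'$ can only decrease length. The Perron--Frobenius comparison principle applied to the positive vector $\vec w'$ then gives $\rho(M'') \le \lambda$, and since $M_2$ is a principal submatrix of $M''$ (obtained by collapsing the maximal invariant forest), monotonicity of the spectral radius under taking principal submatrices yields $\lambda_2 \le \rho(M'') \le \lambda$.

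The main obstacle is the strict inequality when $w_i < w_j$. The sharper Perron--Frobenius principle says that $\rho(M'') < \lambda$ as soon as $M''\vec w' < \lambda \vec w'$ strictly in at least one coordinate, and in our setup this can happen only in the $e'_i$-coordinate and only when tightening genuinely occurs in $f(e_i)\overline{f(e_j)}$. To deduce $\lambda_2 < \lambda$ from only the hypothesis $w_i < w_j$, one must rule out the degenerate configuration in which no tightening occurs and the collapsed invariant forest does not strictly reduce the eigenvalue. Following Bestvina--Handel, this uses irreducibility of the original $M$ together with the distinctness $w_i \ne w_j$ to show that in this degenerate case $\vec w'$ would have to be a Perron--Frobenius eigenvector of $M''$ supported entirely on the non-forest block, an arrangement incompatible with the strict inequality $w_i < w_j$ of the original eigenvector entries. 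Carrying out this asymmetric comparison is the subtlest step and the essential place where the ordering hypothesis $w_i < w_j$ is used.
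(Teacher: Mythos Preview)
The paper omits this proof entirely, deferring to Bestvina--Handel, so the comparison is with \cite[Lemma~1.12]{BestvinaHandel}. Your approach diverges from theirs in a way that creates a genuine gap.

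Your pairing claim --- that every occurrence of $e_i$ in a tight path is adjacent to some $\bar e_j$ --- holds for ordinary graphs but fails in the graph-of-groups generality of the lemma. Only $\iota_{e_j}$ is assumed to be an isomorphism; if $\iota_{e_i}$ is not surjective, then $e_i g \bar e_i$ with $g\notin\iota_{e_i}(\mathcal{G}_{e_i})$ is a tight subpath. In $\mathcal{G}'$ this becomes $e'_i g' \bar e'_i$, contributing length $2(w_i+w_j)$ in your metric versus $2w_i$ in the old one. So the length is not preserved, and for such an $l$ your inequality $(M''\vec w')_l\le\lambda w_l$ can simply fail. (The remark following the lemma in the paper, about not being able to ``freely choose which edge to collapse,'' is precisely about this asymmetry.)

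Bestvina--Handel's actual choice sidesteps the issue: they take $\vec w'$ to be $\vec w$ with the $j$th entry deleted, so the new edge gets length $w_i$, not $w_i+w_j$. One checks that $M''\le M^*$ entrywise, where $M^*$ is obtained from $M$ by adding column $j$ to column $i$ and then deleting row and column $j$; a direct computation gives
\[
(M^*\vec w')_k=\lambda w_k+M_{kj}(w_i-w_j),
\]
which is $\le\lambda w_k$ exactly when $w_i\le w_j$, with no pairing needed. This also makes the strict case transparent: when $w_i<w_j$, irreducibility of $M$ furnishes some $k\ne j$ with $M_{kj}>0$, giving a strict coordinate inequality that survives to the irreducible matrix $M_2$; your sketch, which tries to extract strictness from tightening of $f(e_i)\overline{f(e_j)}$, does not use the hypothesis $w_i<w_j$ in any essential way and remains incomplete.
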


\begin{rk}
	The statement of the lemma hides a problem:
	if we cannot freely choose which edge incident to an inessential valence-two
	vertex to collapse via a valence-two homotopy,
	we may be forced to \emph{increase} $\lambda$.
    Since we aim always to decrease $\lambda$,
    we cannot perform such valence-two homotopies.
    These are the \emph{problematic valence-two vertices} mentioned in the introduction.
    Our assumptions are designed to limit their proliferation.
\end{rk}

\paragraph{Folding.}
Suppose some pair of edges $e_1$, $e_2$ in $\mathcal{G}$ sharing a common initial vertex
have the same $f$-image 
(as graph-of-groups edge paths).
Define a new graph of groups $\mathcal{G}_1$ by identifying $e_1$ and $e_2$ to a single edge $e$.
The map $f\colon \mathcal{G} \to \mathcal{G}$ descends to a well-defined homotopy equivalence
$f_1\colon \mathcal{G}_1 \to \mathcal{G}_1$.
This is an \emph{elementary fold.}
More generally if $e_1'$ and $e_2'$ are maximal initial segments of $e_1$ and $e_2$
with equal $f$-images and endpoints sent to a vertex by $f$,
we first subdivide at the endpoints of $e_1'$ and $e_2'$ if they are not already vertices
and then perform an elementary fold on the resulting edges.

Let us remark that when lifting the map $f$ to the Bass--Serre tree, 
it is possible that the lifted map may identify a pair of edges 
$\tilde e$ and $g.\tilde e$ in the same orbit $e$
and sharing a common initial vertex.
Suppose $\tau(e) = v$ and that $e'$ is the last edge in the edge path $f(e)$.
This happens when $\mathcal{G}_v$ contains an element $g$ such that 
$g$ is not in the image
$\iota_e(\mathcal{G}_e)$ but $f_v(g)$ is in the image of $\iota_{e'}(\mathcal{G}_{e'})$.
To perform the fold, we may need to subdivide at the preimage of $\tau(\bar e')$.
In $\mathcal{G}$, the ``fold'' merely changes the edge group,
increasing it to $f_v^{-1}(f_v(\mathcal{G}_v) \cap \iota_{e'}(\mathcal{G}_{e'}))$.
This fold has no effect on the transition matrix for $f$
unless nontrivial tightening occurs,
in which case the Perron--Frobenius eigenvalue decreases.
(See \cite[Remark 1.6]{BestvinaHandel}.)
It is these folds which may introduce problematic valence-two vertices,
so we shall have to be careful about when to perform them.

\begin{lem}[Lemma 1.15 of \cite{BestvinaHandel}]
	Suppose $f\colon \mathcal{G} \to \mathcal{G}$ is an irreducible topological representative
	and that $f_1\colon \mathcal{G}_1 \to \mathcal{G}_1$ is obtained by folding a pair of edges.
	If $f_1$ is a topological representative, then it is irreducible,
	and the associated Perron--Frobenius eigenvalues satisfy $\lambda_1 = \lambda$.
	Otherwise, let $f_2 \colon \mathcal{G}_2 \to \mathcal{G}_2$
	be the irreducible topological representative obtained by tightening,
	collapsing a maximal pretrivial forest, and collapsing
	a maximal invariant forest.
	Then the associated Perron--Frobenius eigenvalues satisfy $\lambda_2 < \lambda$.
\end{lem}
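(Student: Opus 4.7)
The plan is to adapt Bestvina and Handel's length-function argument for the Perron--Frobenius eigenvector. Let $\vec{w} > 0$ satisfy $M\vec{w} = \lambda\vec{w}$, write $\ell(e) := w_e$, and extend linearly to edge paths via crossing counts, so that $\ell(f(e)) = \lambda\ell(e)$ for every edge $e$ of $\mathcal{G}$. Since $f(e_1) = f(e_2)$, the rows of $M$ corresponding to $e_1$ and $e_2$ agree, giving $\ell(e_1) = \ell(e_2)$; this common value defines $\ell$ on the folded edge $e$ of $\mathcal{G}_1$, extending to a positive length function on $\mathcal{G}_1$. The naive image of any edge $\hat{e}$ of $\mathcal{G}_1$ before tightening is obtained from the corresponding $f$-image by substituting $e$ for $e_1, e_2$, an operation which preserves length; tightening and collapsing can only shorten, so
\[ \ell(f_1(\hat{e})) \le \lambda\,\ell(\hat{e}) \]
for every edge $\hat{e}$, with equality precisely when no cancellation and no crossing of a collapsed forest occurs on the image.

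In Case 1, where $f_1$ is already a topological representative, equality holds everywhere, so $\vec{w}_1 := (\ell(\hat{e}))_{\hat{e}}$ is a positive right eigenvector of $M_1$ with eigenvalue $\lambda$. I would check that $M_1$ inherits irreducibility from $M$: folding corresponds to identifying two equal rows and summing two columns of $M$, which preserves strong connectivity of the transition graph. The standard Perron--Frobenius uniqueness statement then gives $\lambda_1 = \lambda$. In Case 2, some image is strictly shortened, so $\ell(f_1(\hat{e})) < \lambda\,\ell(\hat{e})$ for at least one $\hat{e}$; restricting the length function to surviving edges of $\mathcal{G}_2$, the positive vector $\vec{w}_2$ satisfies $M_2\vec{w}_2 \le \lambda\vec{w}_2$ with strict inequality in at least one coordinate, and pairing with a positive left Perron eigenvector of the irreducible matrix $M_2$ yields $\lambda_2 < \lambda$.

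The main obstacle is the bookkeeping in Case 2: one must verify that some strict inequality persists into $M_2$ after collapsing of the pretrivial and invariant forests. If the originally tightened edge lies inside the collapsed forest, one uses irreducibility of $M$ to find a surviving edge whose $f_1$-image crosses a collapsed edge, so that this surviving edge inherits the strict inequality. This is exactly the bookkeeping in Bestvina--Handel's original argument for their Lemma 1.15, which carries over verbatim once the length function above is in place; the only new feature here is the substitution rule for $\ell$ under the fold, which is handled by the calculation $\ell(e_1) = \ell(e_2)$ coming from $f(e_1) = f(e_2)$.
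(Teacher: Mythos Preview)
Your proposal is correct and is precisely the Bestvina--Handel length-function argument; the paper itself omits the proof, stating that ``the proofs are identical to the original,'' so your approach matches what the paper intends. One small notational point: this paper's transition matrix convention is the transpose of Bestvina--Handel's (here $M_{ij}$ counts crossings of $e_i$ in $f(e_j)$), so with the paper's $M$ it is the \emph{columns} of $e_1,e_2$ that agree and you want a \emph{left} Perron eigenvector to get $\ell(f(e))=\lambda\ell(e)$; your argument is written in Bestvina--Handel's convention, which is fine but worth flagging if you are matching the paper's notation.
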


\begin{lem}
    \label{foldingwithassumption1}
    Suppose $f\colon \mathcal{G} \to \mathcal{G}$ is an irreducible topological representative
    of the outer automorphism $\varphi \in \out(\pi_1(\mathcal{G}))$
    and no generalized edge group of $\mathcal{G}$
    is mapped properly into a conjugate of itself by some
    and hence any automorphism $\Phi$ representing $\varphi$.
    Then the edge groups of $\mathcal{G}$ are all isomorphic,
    and the injective maps $f_{e,e_i}$ are isomorphisms.
    If $f(e) = f(e')$ and $e$ and $e'$ share a common vertex so that we may fold $e$ and $e'$,
    then $\iota_e(\mathcal{G}_e) = \iota_{e'}(\mathcal{G}_{e'})$
    as subgroups of $\mathcal{G}_{\tau(e)}$,
    and the resulting map $f' \colon \mathcal{G}' \to \mathcal{G}'$
    again has the property above.
\end{lem}

\begin{proof}
    It is clear that for each edge $e$ and each edge $e_i$ of the edge path $f(e)$,
    the map $f_{e,e_i}$ is injective,
    for if it were not, then the map on the fundamental group $f_\sharp$ would fail to be injective.
    Since $f$ is irreducible,
    there exists a sequence $e = e^0,e^1,\ldots,e^k = e$
    such that $e^{i}$ appears in the edge path $f(e^{i-1})$ in either orientation
    for $i$ satisfying $1 \le i \le k$.
    Thus we have a sequence of injective homomorphisms
    \[  \begin{tikzcd}
        \mathcal{G}_{e} = \mathcal{G}_{e^0} \ar[r, "f_{e^0,e^1}"] & \cdots 
        \ar[r, "f_{e^{k-1},e^k}"] & \mathcal{G}_{e^k} = \mathcal{G}_e.
    \end{tikzcd}\]
    The composition of these maps is an isomorphism,
    for otherwise the generalized edge group corresponding to $\mathcal{G}_e$
    would be mapped properly into a conjugate of itself,
    from which we conclude that each composing map is an isomorphism.
    By irreducibility of $f$, we may choose $e^1$ to be any edge of the edge path $f(e)$,
    so we see that each map $f_{e,e_i}$ is an isomorphism.

    Suppose that $f(e) = f(e') = g_0e_1g_1\ldots e_kg_k$.
    We have the following pair of commutative diagrams
    \[  \begin{tikzcd}
        \mathcal{G}_e \ar[r, "\iota_e"] \ar[d, "f_{e,e_k}"'] 
        & \mathcal{G}_v \ar[d, "\operatorname{ad}(g_k)f_v"] 
        & & \mathcal{G}_{e'} \ar[r, "\iota_{e'}"] \ar[d, "f_{e',e_k}"'] 
        & \mathcal{G}_v \ar[d, "\operatorname{ad}(g_k)f_v"] \\
        \mathcal{G}_{e_k} \ar[r, "\iota_{e_k}"]
        & \mathcal{G}_{f(v)}
        & & \mathcal{G}_{e_k} \ar[r, "\iota_{e_k}"]
        & \mathcal{G}_{f(v)}.
    \end{tikzcd}\]
    Since $f_{e,e_k}$ and $f_{e',e_k}$ are isomorphisms,
    we conclude that 
    \[
        \operatorname{ad}(g_k)f_v\iota_e(\mathcal{G}_e) = \iota_{e_k}(\mathcal{G}_{e_k}) 
        = \operatorname{ad}(g_k)f_v\iota_{e'}(\mathcal{G}_{e'})
    \]
    and since $\operatorname{ad}(g_k)f_v$ is injective,
    it follows that $\iota_e(\mathcal{G}_e) = \iota_{e'}(\mathcal{G}_{e'})$.
    Observe that if $\Gamma$ is the original Bass--Serre tree
    and $\Gamma'$ is the new Bass--Serre tree after  folding,
    then every edge stabilizer in $\Gamma$ is an edge stabilizer in $\Gamma'$
    and conversely every edge stabilizer in $\Gamma'$ comes from an edge stabilizer in $\Gamma$.
    It follows that $\mathcal{G}$ and $\mathcal{G}'$ have the same
    generalized edge groups, so the resulting map $f'\colon \mathcal{G}' \to \mathcal{G}'$
    has the desired property.
\end{proof}

From \Cref{foldingwithassumption1},
we deduce that in fact, if $\mathcal{G}$ satisfies our first standing assumption, then
all folding takes place between \emph{distinct} edges of $\mathcal{G}$,
and that the edge group of the newly folded edge is isomorphic to $\mathcal{G}_{e}$ and $\mathcal{G}_{e'}$.

\begin{lem}
    \label{edgesbound}
    The number of edges of a graph of groups homotopy equivalent to $\mathbb{G}$
    without inessential valence-one or valence-two vertices is bounded.

    If the Bass--Serre tree of $\mathbb{G}$ is locally finite
    and there are finitely many isomorphism types of graphs of groups homotopy equivalent to $\mathbb{G}$
    all of whose edges are surviving,
    then there is a bound to the number of edges of a graph of groups homotopy equivalent to $\mathbb{G}$
    without inessential valence-one vertices and for which every inessential valence-two vertex
    is problematic.
\end{lem}

Here we call an inessential valence-two vertex of a graph of groups \emph{problematic}
if exactly one of its edge-to-vertex group inclusions is surjective.

\begin{proof}
    Call a vertex of a graph of groups $\mathcal{G}$ \emph{essential} if for all oriented edges $e \in \st(v)$,
    the monomorphism $\iota_{\bar e} \colon \mathcal{G}_e \to \mathcal{G}_v$ is not surjective.
    Because the graph of groups $\mathbb{G}$ is assumed to be reduced,
    every vertex of $\mathbb{G}$ is either essential or incident to an edge $e$ which forms a loop
    and for which one of the monomorphisms $\iota_e$ or $\iota_{\bar e}$ is surjective.
    Let $\eta(\mathbb{G})$ be the number of essential vertices of $\mathbb{G}$
    and let $\beta(\mathbb{G})$ be the first Betti number of $\mathbb{G}$.

    We claim that any graph of groups $\mathcal{G}$ homotopy equivalent to $\mathbb{G}$
    has at most $\eta(\mathbb{G})$ essential vertices, but it may have fewer.
    Each essential vertex of $\mathbb{G}$ corresponds to the conjugacy class
    of a \emph{maximal elliptic} subgroup $H$;
    here elliptic means $H$ stabilizes some vertex of the relevant Bass--Serre tree
    and maximal means that $H$ is not conjugate to a proper subgroup of an elliptic subgroup.
    Let $T$ be the Bass--Serre tree of $\mathbb{G}$.
    We claim that each maximal elliptic subgroup $H$ of $F$ with the additional property
    that the fixed-point set of $H$ is bounded 
    is represented by the vertex group of some essential vertex of $\mathbb{G}$.
    Indeed, if $H$ is maximal elliptic,
    then it is contained in and hence equal to some vertex stabilizer in $T$.
    The corresponding vertex $v$ of $\mathbb{G}$ must be essential:
    by maximality, if $v$ is incident to an edge which forms a loop
    and for which one of the monomorphisms $\iota_e$ or $\iota_{\bar e}$ is surjective,
    then both are surjective, which would contradict boundedness of the fixed-point set of $H$.
    The properties of being maximal elliptic and having bounded fixed-point set
    are invariant under homotopy equivalence by \cite[Theorem 3.8]{GuirardelLevittDeformation},
    from which it follows that $\mathcal{G}$ has at most $\eta(\mathbb{G})$ essential vertices.

    Now, $\mathcal{G}$ has at most $2\eta(\mathbb{G}) + 3\beta(\mathbb{G}) - 3$ edges.
    To see this, form a new graph $G'$ from $G$
    by cyclically ordering the essential vertices of $\mathcal{G}$
    and attaching an edge from each essential vertex to its neighbors in the cyclic ordering.
    The graph $G'$ has no valence-one or valence-two vertices
    and first Betti number at most $\eta(\mathbb{G}) + \beta(\mathbb{G})$.
    An Euler characteristic argument reveals that $G'$ 
    has at most $3(\eta(\mathbb{G}) + \beta(\mathbb{G})) - 3$ edges,
    from which the stated bound for $\mathcal{G}$ follows.
    
    Now suppose that the Bass--Serre tree of $\mathbb{G}$ is locally finite
    and that there are finitely many isomorphism types of graphs of groups
    homotopy equivalent to $\mathbb{G}$ all of whose edges are surviving.
    The Bass--Serre tree of each of these graphs of groups is locally finite
    with finitely many orbits of vertices,
    so there is a maximum valence $M$ of any vertex in any such Bass--Serre tree.
    Let $\mathcal{G}$ be a graph of groups homotopy equivalent to $\mathbb{G}$
    without inessential valence-one vertices
    and for which every inessential valence-two vertex is problematic.
    It is not quite true that the graph of groups obtained from $\mathcal{G}$
    by performing a maximal collapse of edges incident to inessential valence-two vertices
    has the property that every edge is surviving,
    so we further collapse all non-surviving edges
    to obtain a graph of groups $\mathcal{G}'$ without inessential valence-one or valence-two vertices,
    every edge of which is surviving.
    Let $\Gamma$ and $\Gamma'$ be the Bass--Serre trees of $\mathcal{G}$ and $\mathcal{G}'$.
    The collapse map $p\colon \Gamma \to \Gamma'$ has compact fibers,
    so each vertex $\tilde v$ of $\Gamma'$ corresponds to a finite subtree $T_{\tilde v}$ of $\Gamma$.
    The valence of $\tilde v$ is the sum of the edges incident to $T_{\tilde v}$ but not contained in it
    and is bounded by $M$. 
    Since each valence-two vertex of $\mathcal{G}$ is problematic,
    each vertex of $\Gamma$ has valence at least three,
    and it follows that the size of each tree $T_{\tilde v}$ is bounded depending only on $M$,
    from which we conclude that there is a bound 
    on the number of problematic valence-two vertices of $\mathcal{G}$
    and thus a bound on the number of edges of $\mathcal{G}$.
\end{proof}

\begin{proof}[Proof of \Cref{traintrackthm}]
	Let $f\colon \mathbb{G} \to \mathbb{G}$ be an irreducible topological representative of $\varphi$,
    where we recall that $\mathbb{G}$ is \emph{reduced} in the sense of \cite{Forester}
    and satisfies one of our assumptions.

	Suppose the Perron--Frobenius eigenvalue $\lambda$ satisfies $\lambda =1$.
	Then $f$ transitively permutes the edges of $G$ and is thus a train track map.
	So assume $\lambda > 1$.
    Recall our standing assumptions:
    \begin{enumerate}
        \item No generalized edge group of $\mathbb{G}$
            is mapped properly into a conjugate of itself by some
            and hence any automorphism $\Phi$ representing $\varphi$.
        \item Edge groups of $\mathbb{G}$ have finite index in the incident vertex groups.
    \end{enumerate}
    If $\mathbb{G}$ satisfies the first assumption,
    we will show that \Cref{foldingwithassumption1}
    implies that any graph of groups $\mathcal{G}'$ obtained from $\mathbb{G}$
    has no problematic valence-two vertices
    and by \Cref{edgesbound} 
    we conclude that there is a uniform bound $L$ to the number of edges of $\mathcal{G}'$.
    If $\mathbb{G}$ satisfies the second assumption,
    then we cannot prevent $\mathcal{G}'$ from having problematic valence-two vertices,
    but again by \Cref{edgesbound}
    there is a uniform bound $L$ to the number of edges.

	We will show that if $f\colon \mathbb{G} \to \mathbb{G}$ is not a train track map,
	then there is an irreducible topological representative 
    $f_1 \colon \mathcal{G}_1 \to \mathcal{G}_1$
	without inessential valence-one vertices
	such that the associated Perron--Frobenius eigenvalues satisfy $\lambda_1 < \lambda$.
    If $\mathbb{G}$ satisfies the first assumption,
    we show that $\mathcal{G}_1$ has no inessential valence-two vertices.
    If instead $\mathbb{G}$ satisfies the second assumption,
    we show that inessential valence-two vertices of $G_1$ are problematic.
    It follows that the size of the transition matrix of $f_1$
	is uniformly bounded.

	Furthermore, if $M$ is an irreducible matrix,
	its Perron--Frobenius eigenvalue $\lambda$ is bounded below by the minimum sum of the entries of a row of $M$.
	To see this, let $\vec w$ be a positive eigenvector.
	If $w_j$ is the smallest entry of $\vec w$, $\lambda w_j = (M\vec w)_j$
	is greater than $w_j$ times the sum of the entries of the $j$th row of $M$.

	Thus if we iterate this argument reducing the Perron--Frobenius eigenvalue,
	there are only finitely many irreducible transition matrices that can occur,
	so at some finite stage the Perron--Frobenius eigenvalue will reach a minimum.
	At this point, we must have a train track map.

	To complete the proof, we turn to the question of decreasing $\lambda$.
	Suppose $f\colon \mathbb{G} \to \mathbb{G}$ is not a train track map.
	Then there exists a point $p$ in the interior of an edge such that $f(p)$ is a vertex,
	and $f^k$ is not locally injective (as a map of graph of groups) at $p$ for some $k > 1$.
	We assume that topological representatives act linearly on edges 
    with respect to some metric on $G$.
	Since $\lambda > 1$,
    this means the set of points of $G$ eventually mapped to a vertex is dense.
	Thus we can choose a neighborhood $U$ of $p$ so small 
    that it satisfies the following conditions.
	\begin{enumerate}
		\item The boundary $\partial U$ is a two-point set $\{ s,t\}$,
			where $f^\ell(s)$ and $f^\ell(t)$ are vertices for some $\ell \ge 1$.
        \item $f^{i}|_U$ is injective (as a map of graphs of groups) for $1 \le i \le k-1$.
		\item $f^k$ is two-to-one on $U\setminus\{p\}$,
            and $f^k(U)$ is contained within a single edge.
		\item $p \notin f^i(U)$, for $1 \le i \le k$.
	\end{enumerate}

    Note that a priori the map $f^i|_U$ could fail to be injective as a map of graphs
    sooner than it fails to be injective as a map of graphs of groups.
    Suppose that there exists $1 \le j \le k$ such that $f^i|_U$ is injective 
    as a map of graphs
    for $1 \le i \le j-1$, and $f^j|_U$ 
    is two-to-one on $U\setminus\{p\}$ as a map of graphs.
    We will show that \Cref{foldingwithassumption1}
    implies that in fact $j = k$.
    If instead edge groups of $\mathbb{G}$ merely have finite index
    in the incident vertex groups, we allow the same-orbit edge fold,
    in view of our bound on the number of problematic valence-two vertices that can arise.

	First we subdivide at $p$. 
    Then we subdivide at $f^i(s)$ and $f^i(t)$ for $0 \le i \le \ell - 1$
	(in reverse order so that subdivision is allowed). 
    The vertex $p$ has valence two;
	denote the incident edges by $e$ and $e'$.
	Observe that in the Bass--Serre tree $\Gamma$,
    there are lifts $\tilde e$ and $\tilde e'$ such that
    $\tilde f^{k-1}(\tilde e)$ and $\tilde f^{k-1}(\tilde e')$ 
    are single edges sharing a common initial vertex that are identified by $\tilde f$.
	Thus we may fold.
    Suppose that $j < k$.
    Then this fold increases the edge group 
    $\mathcal{G}_{f^{k-1}(e)} = \mathcal{G}_{f^{k-1}(e')}$,
    so the map of edge groups 
    $\mathcal{G}_{f^{k-1}(e)} \to \mathcal{G}_{f^k(e)}$ is injective 
    (for otherwise $f_\sharp$ could not be injective) and not surjective.
    This contradicts \Cref{foldingwithassumption1}.
    Therefore $j=k$ if $\mathbb{G}$ satisfies the first assumption,
    and in fact $f^{k-1}(e)$ and $f^{k-1}(e')$ 
    are distinct single edges that are identified by $f$.

	The resulting map $f'\colon \mathcal{G}' \to \mathcal{G}'$ 
    may be a topological representative, 
	in which case the Perron--Frobenius eigenvalue $\lambda'$ 
    satisfies $\lambda' = \lambda$.
	In this case $\tilde f'^{k-2}(\tilde e)$ and $\tilde f'^{k-2}(\tilde e')$
    are single edges that are identified by $f$.
	In the contrary case, nontrivial tightening occurs.
	After collapsing a maximal pretrivial forest and a maximal invariant forest,
	the resulting irreducible topological representative 
    $f''\colon \mathcal{G}'' \to \mathcal{G}''$
	has Perron--Frobenius eigenvalue $\lambda''$ satisfying $\lambda'' < \lambda$.

	Repeating this dichotomy $k$ times if necessary,
	we have either decreased $\lambda$,
	or we have folded $e$ and $e'$ (which are distinct edges in the graph of groups)
    so that $p$ is now an inessential valence-one vertex.

	We remove inessential valence-one and non-problematic valence-two vertices 
    by the appropriate homotopies.
    Note that if $\mathbb{G}$ and $\varphi$ satisfy the first assumption,
    all valence-two vertices present were created by subdivision,
    not same-orbit folding,
    and thus are not problematic.
	Since valence-one homotopy always decreases the Perron--Frobenius eigenvalue,
	the resulting irreducible topological representative 
    $f_1\colon \mathcal{G}_1 \to \mathcal{G}_1$
	has Perron--Frobenius eigenvalue $\lambda_1$ satisfying $\lambda_1 < \lambda$.
\end{proof}

\begin{rk}
	As in the original, the proof of \Cref{traintrackthm} provides in outline
	an algorithm that takes as input a topological representative of an irreducible outer automorphism
	and returns a train track map.
    To make it a true algorithm in general,
    one needs an ``oracle'' that can compute images of the various homomorphisms
    $f_e$ and $f_v$,
    compute products of elements in the vertex groups $\mathcal{G}_v$
    and tell when two vertex group elements are equal.
\end{rk}

A \emph{reduction} for an outer automorphism $\varphi\in\out(\pi_1(\mathcal{G}))$
is a topological representative $f\colon \mathcal{G} \to \mathcal{G}$
which has no inessential valence-one vertices and no invariant forests
but has a nontrivial invariant subgraph.
If $\varphi$ has a reduction, then it is \emph{reducible}---i.e.~not irreducible.
Let $F = A_1*\dotsb*A_n*F_k$ be a free product,
represented as the fundamental group of a graph of groups $\mathbb{G}$
with trivial edge groups, vertex groups  
the $A_i$ and ordinary fundamental group free of rank $k$.
Define the \emph{complexity} of $F$ 
relative to $\mathbb{G}$ to be the quantity $n + 2k - 1$.
If $F'$ is a free factor of $F$ relative to $\mathbb{G}$
we may define the complexity of $F'$ relative to $\mathbb{G}$ analogously.
The final result of this section is the following characterization of reducibility
for outer automorphisms $\varphi$
represented on $\mathbb{G}$-marked graphs of groups with trivial edge groups.

\begin{prop}
	\label{reduciblefreeproduct}
    Let $F$ be a free product. An outer automorphism $\varphi$
    represented on a $\mathbb{G}$-marked graph of groups with trivial edge groups
    is reducible relative to $\mathbb{G}$ 
    if and only if there are free factors $F^1,\dotsc,F^m$ of $F$ with
	positive complexity such that $F^1*\dotsb*F^m$ is a free factor of $F$
	and $\varphi$ cyclically permutes the conjugacy classes of the $F^i$.
\end{prop}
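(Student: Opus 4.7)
The proof has two directions. For the forward direction, I will decode the components of a reduction's nontrivial invariant subgraph into the cyclic free-factor data; for the reverse, I will construct a $\mathbb{G}$-marked graph of groups and topological representative exhibiting the data as a reduction.

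Suppose $f\colon \mathcal{G} \to \mathcal{G}$ is a reduction with nontrivial invariant subgraph $G_0$. Let $C_1,\dotsc,C_s$ be the components of $G_0$ and set $F^i = \pi_1(\mathcal{G}|_{C_i})$. Because $\mathcal{G}$ is $\mathbb{G}$-marked it has trivial edge groups, so each $F^i$ is naturally a free factor of $F$, and more generally the free product of the fundamental groups of any subcollection of components is, after choosing connecting paths in $G$, a free factor of $F$. By continuity, $f$ sends each $C_i$ into a single $C_{\sigma(i)}$ for some function $\sigma\colon\{1,\dotsc,s\}\to\{1,\dotsc,s\}$, so on fundamental groups $f_\sharp(F^i)\subseteq F^{\sigma(i)}$ up to conjugation. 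Since $f_\sharp$ is an automorphism, $f_\sharp(F^i) \cong F^i$ is itself a free factor of $F^{\sigma(i)}$, and monotonicity of complexity under free factor inclusion forces the complexities along $i,\sigma(i),\sigma^2(i),\dotsc$ to form a nondecreasing chain that eventually stabilizes. On any $\sigma$-cycle the complexities are therefore equal, so $f_\sharp$ restricts to an isomorphism up to conjugation, i.e.\ $\varphi$ cyclically permutes the conjugacy classes of the $F^{i_j}$. Because $G_0$ contains an edge, some $C_i$, hence the eventual cycle it enters, has positive complexity.

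For the reverse direction, given $F^1,\dotsc,F^m$ as hypothesized, write $F = F^1*\dotsb*F^m*H$ and build a $\mathbb{G}$-marked graph of groups $\mathcal{G}$ by wedging, at a common basepoint with trivial vertex group, a graph of groups $\mathcal{H}$ representing $H$ together with graphs of groups $\mathcal{C}_1,\dotsc,\mathcal{C}_m$ representing each $F^i$ and carrying the $\mathbb{G}$-marking of the appropriate vertex groups and free generators. Positive complexity of each $F^i$ ensures that each $\mathcal{C}_i$ contains at least one edge. Because $\varphi$ cyclically permutes the conjugacy classes of the $F^i$, there is a representative automorphism $\Phi$ of $\varphi$ sending $F^1*\dotsb*F^m$ to itself and cyclically permuting the summands, which we realize by a topological representative $f\colon\mathcal{G}\to\mathcal{G}$ with $f(C_i)\subseteq C_{i+1\bmod m}$. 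Then $G_0 = C_1\sqcup\dotsb\sqcup C_m$ is a nontrivial invariant subgraph, and after applying valence-one homotopies and collapsing any invariant forest (as in \Cref{traintracksection}) the subgraph $G_0$ survives with each $C_i$ still containing an edge, so $f$ becomes a reduction.

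The main obstacle is the complexity monotonicity step in the forward direction: upgrading $f_\sharp(F^i) \subseteq F^{\sigma(i)}$ to equality on $\sigma$-cycles depends on the fact that complexity is monotone under free-factor containment and strictly increases for proper inclusions, a standard Kurosh-type consequence of our free product setting that should be stated explicitly. A secondary difficulty in the reverse direction is ensuring that subsequent valence-one homotopies and forest collapses do not intersect any $C_i$ destructively, which again reduces to the positive-complexity hypothesis.
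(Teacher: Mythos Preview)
Your forward direction is sound in spirit but has a small slip: ``$G_0$ contains an edge'' does not imply any component has positive complexity (a single edge with trivial vertex groups is a tree with trivial fundamental group). What you need is that a reduction by definition has \emph{no nontrivial invariant forests}, so $G_0$ is not a forest, hence some component is noncontractible and therefore has positive complexity. With that fix, your $\sigma$-cycle argument goes through; the paper takes the shorter route of simply picking one noncontractible component and following its $f$-orbit.

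The reverse direction has a genuine gap. You assert that because $\varphi$ cyclically permutes the conjugacy classes $[[F^i]]$, there exists a single automorphism $\Phi$ representing $\varphi$ with $\Phi(F^i)=F^{i+1}$ for all $i$ simultaneously. This is false in general. The hypothesis gives, for each $i$, some $c_i\in F$ with $\Phi(F^i)=c_iF^{i+1}c_i^{-1}$; one inner adjustment can kill a single $c_i$, but not all of them at once. Concretely, in $F=A*B*\langle t\rangle$ take $\Phi$ with $\Phi(A)=B$, $\Phi(B)=tAt^{-1}$, $\Phi(t)=t$; then $\varphi$ swaps $[[A]]$ and $[[B]]$, but no representative sends $B$ to $A$ while also sending $A$ to $B$, and indeed $[[A*B]]$ is not $\varphi$-invariant. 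Your wedge construction, with all $\mathcal{C}_i$ meeting at a single basepoint, leaves no room to absorb these conjugators: for $G_0=\bigcup\mathcal{C}_i$ to be $f$-invariant you would need $\Phi(F^1*\dotsb*F^m)\subseteq F^1*\dotsb*F^m$, which fails in the example. The paper avoids this by keeping the $\mathbb{G}_i$ at separate vertices $\star_i$ joined to a central vertex by edges $E_i$, and defining $f(E_i)=\gamma_iE_{i+1}$ where $\gamma_i$ is a loop representing $c_i$; the extra edges are precisely where the conjugating elements live. There is then a further wrinkle---when the complementary factor has complexity zero the $E_i$ themselves may form an invariant forest that gets collapsed, destroying properness---which the paper handles with a small fold-and-remark trick; your final sentence glosses over this as well.
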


\begin{proof}
    Suppose first that $\varphi$ is reducible relative to $\mathscr{A}$; 
	let $f\colon\mathcal{G} \to \mathcal{G}$ be a reduction
	and let $G_i = f^i(G_1)$, $0 \le i \le m-1$ denote
	distinct noncontractible components of an $f$-invariant subgraph.
	Then each $\pi_1(\mathcal{G}|_{G_i})$
	determines a free factor $F^i$
	with positive complexity such that $F^1*\dotsb*F^m$ is a free factor of $F$
	and such that $\varphi$ cyclically permutes the conjugacy classes of the $F^i$.

	Conversely, suppose $F^1,\dotsc,F^m$ are free factors with positive complexity
	as in the statement of the proposition. Take $F^{m+1}$ a free factor so that
	$F = F^1*\dotsb*F^m*F^{m+1}$. Suppose that $n_i$ and $k_i$ are the data determining the complexity of $F^i$
	for $1 \le i \le m+1$.
	Let $\mathbb{G}_i$ be the thistle with $n_i$ prickles and $k_i$ petals 
	(if $n_{m+1}=k_{m+1} = 0$, then $\mathbb{G}_{m+1}$ is a vertex)
	and distinguished vertex $\star_i$.
	For each $i$ satisfying $1 \le i \le m$ choose automorphisms $\Phi_i \colon F \to F$
	representing $\varphi$ such that $\Phi_i(F^i) = F^{i+1}$, with indices taken mod $m$,
	and let $f_i\colon\mathbb{G}_i \to \mathbb{G}_{i+1}$ be the corresponding
	topological representatives
	taking $\star_i$ to $\star_{i+1}$.
	Define $\mathcal{G}$ to be the union of the $\mathbb{G}_i$ for $1 \le i \le m+1$
	together with, for $1 \le i \le m$, an oriented edge $E_i$
	connecting $\star_i$ to $\star_{m+1}$.

	Collapsing the $E_i$ to a point yields a homotopy equivalence $\mathcal{G} \to \mathbb{G}$,
	where $\mathbb{G}$ is the thistle with $n$ prickles and $k$ petals.
	Identifying the image of $\pi_1(\mathbb{G}_i,\star_i)$ with $F^i$
	will serve as (the inverse of) a marking.
	We will use $\Phi_1$ to create 
	a topological representative $f\colon\mathcal{G} \to \mathcal{G}$ for $\varphi$.
	Define $f(\mathbb{G}_i) = f_i(\mathbb{G}_i)$ for $1 \le i \le m$.
	By assumption there exist $c_i \in F$ such that $\Phi_1(x) = c_i\Phi_i(x)c_i^{-1}$.
	Choose $\gamma_i$ a closed tight edge path based at $\star_{m+1}$
	representing $c_i$ (so $\gamma_1$ is the trivial path)
	and define $f(E_i) = \gamma_iE_{i+1}$ with indices taken mod $m$.
    Finally define $f(\mathbb{G}_{m+1})$ by $\Phi_1$ and the marking on $\mathbb{G}_{m+1}$.

	The topological representative $f\colon\mathcal{G} \to \mathcal{G}$
	is a reduction for $\varphi$ unless $\mathcal{G}$ has an invariant contractible forest.
	Since thistles have contractible subgraphs, there are a few possibilities.
	If there is a family of non-loop edges $e_1,\dotsc,e_m$ with $e_i \in \mathbb{G}_i$
    and $f(e_i) = e_{i+1}$ with indices mod $m$, we may collapse each of these edges.
    (Note that up to equivalence, if $f(e_i) = e_{i+1}$ as a map of graphs,
    then $f(e_i) = e_{i+1}$ as a map of graphs of groups.)
	Likewise if some non-loop edge of $\mathbb{G}_{m+1}$ is sent to itself, we may collapse it.
	If each $c_i = 1 \in F$, then the $E_i$ also form an invariant forest
	that is contractible if the subgraph they span 
	contains at most one vertex with vertex group some $A_i$.
	After all these forest collapsings, the only worry is that $F^{m+1}$
	has nonpositive complexity and the $E_i$ would be collapsed, leaving $G$
	as the only $f$-invariant subgraph.
	In this case, choose $A$ an edge of $\mathbb{G}_1$ sharing an initial vertex with $E_1$,
	and change $f$ via a homotopy with support in $E_1$ so that
	$f(E_1) = f(A)f(\bar A)E_2$, then fold the initial segment of $E_1$ mapping to $f(A)$ with all of $A$.
	The resulting graph is combinatorially identical to $\mathcal{G}$ but the markings differ.
	Now $f(E_1) = f(\bar A)E_2$ and $f(E_k) = \bar AE_1$, 
	so the $E_i$ no longer form an invariant forest.
\end{proof}

\section{Relative Train Track Maps}
\label{relativetraintracksection}
The purpose of this section is to prove the general case of \Cref{relativetraintrack}.
The strategy is to adapt arguments in \cite[Section 5]{BestvinaHandel}
and \cite[Section 2]{FeighnHandelAlg}.

\paragraph{Filtrations.}
A \emph{filtration} on a marked graph of groups $\mathcal{G}$
with respect to a topological representative $f\colon \mathcal{G} \to \mathcal{G}$
is an increasing sequence $\varnothing = G_0 \subset G_1 \subset \dotsb\subset G_m = G$
of $f$-invariant subgraphs. The subgraphs are not required to be connected.

\paragraph{Strata.}
The \emph{$r$th stratum} of $\mathcal{G}$ 
is the subgraph $H_r$ containing those edges of $G_r$ not contained in $G_{r-1}$.
An edge path has \emph{height $r$} if it is contained in $G_r$
and meets the interior of $H_r$.
If both edges of a turn $T$ are contained in a stratum $H_r$,
then $T$ is a \emph{turn in $H_r$.} If a path has height $r$
and contains no illegal turns in $H_r$ then it is \emph{$r$-legal.}

When we think of a stratum $H_r$ or a filtration element $G_r$ as a graph of groups in its own right,
the vertex and edge groups of $H_r$ and $G_r$ are equal to what they are in $\mathcal{G}$.
In the language of Bass \cite{Bass}, we work with subgraphs of groups,
not subgraphs of subgroups.

\paragraph{Transition Submatrices.}
Relabeling the edges of $G$ and thus permuting the rows and columns
of the transition matrix $M$ so that the edges of $H_i$ precede those of $H_{i+1}$,
$M$ becomes block upper-triangular, with the $i$th block $M_i$ equal to the square submatrix
of $M$ containing those rows and columns corresponding to edges in $H_i$.

A filtration is \emph{maximal} when each $M_i$ is either irreducible or the zero matrix.
If $M_i$ is irreducible, 
call $H_i$ an \emph{irreducible stratum} and a \emph{zero stratum} otherwise.
If $H_i$ is irreducible, $M_i$ has an associated Perron--Frobenius eigenvalue $\lambda_i \ge 1$.
If $\lambda_i > 1$, then $H_i$ is an \emph{exponentially growing stratum.}
Otherwise $\lambda_i = 1$, we say $H_i$ is \emph{non-exponentially growing}
and $M_i$ is a transitive permutation matrix.

Associated to a topological representative $f\colon \mathcal{G} \to \mathcal{G}$
there is a maximal filtration $\varnothing = G_0 \subset \dotsb \subset G_m = G$
defined as follows \cite[p.~33]{BestvinaHandel}.
Order the edges of $G$, and let
$M = (m_{ij})$ be the resulting transition matrix for $f$.
Construct a graph $E$ with a vertex $v_i$ for each edge $e_i$ of $G$,
and $m_{ij}$ oriented edges from $v_j$ to $v_i$.
Two edges $e_1$ and $e_2$ belong to the same irreducible stratum
if there exists an oriented path from $v_1$ to $v_2$ and an oriented path from $v_2$ to $v_1$.
An edge $e_1$ does not belong to an irreducible stratum 
if there is no oriented edge path from $v_1$ to itself.
A collection of such edges may determine a zero stratum
if for each pair of edges $e_1$ and $e_2$ in the collection,
there is no oriented edge path from $v_1$ to $v_2$ nor from $v_2$ to $v_1$.
(Perhaps it is easiest to therefore just let each zero stratum be a single edge.)
Let $H$ and $H'$ be two resulting strata;
we define a partial order on strata.
Put $H$ before $H'$
if there are edges $e_1 \in H$ and $e_2 \in H'$
such that there is an oriented path from $v_2$ to $v_1$.
Complete this partial order to a total order arbitrarily.
Thus a maximal filtration associated to $f$ is not unique,
although the irreducible strata are.
We will think of a maximal filtration as part of the data of a topological representative
$f\colon \mathcal{G} \to \mathcal{G}$.

The following lemma is an observation we made in the proof of \Cref{foldingwithassumption1}.

\begin{lem}
    \label{coHopfianisomorphic}
    Let $f\colon\mathcal{G} \to \mathcal{G}$ be a topological representative
    of an outer automorphism $\varphi \in \out(\pi_1(\mathcal{G}))$
    with irreducible stratum $H_r$
    and the property that no iterate of $\Phi$
    maps a generalized edge group of $\mathcal{G}$ properly into a conjugate of itself
    for some and hence any automorphism $\Phi$ representing $\varphi$.
    All edge groups in $H_r$ are isomorphic,
    and in fact if $e_i$ is an edge of $H_r$ in the $f$-image of the edge $e$ of $H_r$,
    then the map $f_{e,e_i}$ is an isomorphism.
\end{lem}

\begin{proof}
    Let $e_1$ and $e_2$ be edges of $H_r$.
    By irreducibility, there is some $k_1 \ge 1$ and $k_2 \ge 1$
    such that the edge path $f^{k_1}(e_1)$ contains $e_2$
    and similarly $f^{k_2}(e_2)$ contains $e_1$.
    This implies that there are injective homomorphisms 
    $\mathcal{G}_{e_1} \to \mathcal{G}_{e_2}$
    and $\mathcal{G}_{e_2} \to \mathcal{G}_{e_1}$.
    The double composition 
    $\mathcal{G}_{e_1}  \to \mathcal{G}_{e_2} \to \mathcal{G}_{e_1}$
    must be an isomorphism,
    so we conclude that each composing homomorphism is an isomorphism.
    In fact, by irreducibility,
    we can arrange so that $f_{e_1,e_i}$ is a composing homomorphism
    of the map $\mathcal{G}_{e_1} \to \mathcal{G}_{e_2}$
    for $e_i$ any edge of $H_r$ contained in the $f$-image  of the edge $e_1$.
\end{proof}

\paragraph{Eigenvalues.}
Let $H_{r_1},\dotsc,H_{r_k}$ be the exponentially growing strata for $f\colon \mathcal{G} \to \mathcal{G}$.
We define $\pf(f)$ to be the sequence of associated Perron--Frobenius eigenvalues
$\lambda_{r_1},\dotsc,\lambda_{r_k}$ in nonincreasing order. We order the set
\[
	\{\pf(f) \mid f\colon\mathcal{G} \to \mathcal{G} \text{ is a topological representative of }\varphi\}
\]
lexicographically; thus if $\pf(f) = \lambda_1,\dotsc,\lambda_k$ and $\pf(f') = \lambda'_1,\dotsc,\lambda'_\ell$,
then $\pf(f) < \pf(f')$ if there is some $j$ with $\lambda_j < \lambda_{j}'$
and $\lambda_i = \lambda'_i$ for $i$ satisfying $1 \le i < j$,
or if $k < \ell$ and $\lambda_i = \lambda'_i$ for $i$ satisfying $1 \le i \le k$.

\paragraph{Relative Train Track Maps.}
Throughout the paper, we will assume our filtrations are maximal unless otherwise specified.
Given $\sigma$ a path in $\mathcal{G}$,
let $f_\sharp(\sigma)$ denote a tight path homotopic rel endpoints to $f(\sigma)$.
(If one wants $f_\sharp(\sigma)$ to be unique, 
one could insist that $f_\sharp(\sigma)$ be in normal form.)
We will denote a maximal filtration preserved by $f\colon \mathcal{G} \to \mathcal{G}$
as $\varnothing = G_0 \subset \dotsb \subset G_m = G$.
The map $f$
is a \emph{relative train track map}
if for every exponentially growing stratum $H_r$, we have
\begin{enumerate}
	\item[\hypertarget{EG-i}{(EG-i)}] Directions in $H_r$ are mapped to directions in $H_r$ by $Df$;
		it follows that every turn with one edge in $H_r$ and the other in $G_{r-1}$ is legal.
	\item[\hypertarget{EG-ii}{(EG-ii)}] If $\sigma \subset G_{r-1}$ is a homotopically nontrivial path
		with endpoints in $H_r \cap G_{r-1}$, 
        then some (and hence every) $f_\sharp(\sigma)$ is nontrivial as well.
	\item[\hypertarget{EG-iii}{(EG-iii)}] If $\sigma\subset G_r$
		is a tight $r$-legal path, then $f(\sigma)$ is an $r$-legal path.
\end{enumerate}

The main result of this section is
\begin{thm}
	\label{relativetraintrackthm}
    Assuming an oracle that can compute products of elements in vertex groups,
    can compute images of injective homomorphisms between edge groups and vertex groups of $\mathcal{G}$
    and can tell when two vertex group elements are equal,
    and that one of the following conditions holds,
	there is an algorithm that takes as input a topological representative
    $f\colon \mathcal{G} \to \mathcal{G}$ of $\varphi \in \out(\pi_1(\mathcal{G}))$
	and improves it to a relative train track map $f'\colon \mathcal{G}' \to \mathcal{G}'$.
    The conditions are as follows.
    \begin{enumerate}
        \item Edge groups of $\mathcal{G}$ are finitely generated
            and for some and hence every $\Phi$ representing $\varphi$,
            no generalized edge group of $\mathcal{G}$ is mapped properly into a conjugate of itself
            by some iterate of $\Phi$.
        \item Vertex groups  of $\mathcal{G}$ are finitely generated
            and edge groups have finite index in the incident vertex groups.
            Additionally there are finitely many isomorphism types of graphs of groups
            $\mathcal{G}'$ homotopy equivalent to $\mathcal{G}$
            with the property that every edge of $\mathcal{G}'$ is surviving.
    \end{enumerate}
\end{thm}

We sketch the outline of the proof:
we begin with a topological representative that is \emph{bounded,}
a term which will be defined below.
We use two new operations, described in \Cref{operation1} and \Cref{operation2}
so that the resulting topological representative satisfies \hyperlink{EG-i}{(EG-i)}
and \hyperlink{EG-ii}{(EG-ii)}.
If \hyperlink{EG-iii}{(EG-iii)} is not satisfied, as in \cite{BestvinaHandel} and \cite{FeighnHandelAlg},
we modify the algorithm in the proof of \Cref{traintrackthm} to reduce $\pf(f)$,
the set of Perron--Frobenius eigenvalues for the exponentially growing strata of $f\colon \mathcal{G} \to \mathcal{G}$,
while remaining bounded.
The boundedness assumption ensures that we will hit a minimum value after a finite number of moves,
at which point \hyperlink{EG-iii}{(EG-iii)} will be satisfied.

Let us say a few words about the input of the algorithm:
a finite, connected graph of groups is a graph
together with groups and homomorphisms between them
(for which we assume we have an oracle).
The data of a topological representative
is the finite connected graph of groups $\mathcal{G}$
together with the filtration, a finite list of subgraphs of $G$,
a list of finite edge  paths $f(e) \in \mathcal{G}$ for each edge $e$ of $G$,
and a finite list of injective homomorphisms
between vertex and edge  groups of $\mathcal{G}$;
one for each $f_v$ and each $f_{e_i}$.
The oracle guarantees that we can, for instance,
tell when two edge paths $f(E)$ and $f(E')$ share a common initial segment
(perhaps after passing to a homotopic topological representative
or changing the marking).

\paragraph{Bounded Representatives.}
As we observed in \Cref{edgesbound},
there exists $L$ such that 
if $\mathcal{G}$ is a marked graph of groups 
without inessential valence-one vertices
and either
\begin{enumerate}
    \item without inessential valence-two vertices, or
    \item which satisfies our second assumption
        and for which every inessential valence-two vertex is problematic,
\end{enumerate}
then $\mathcal{G}$ has at most $L$ edges.
Our first assumption coupled with the assumption that $\varphi$ was irreducible 
allowed us to remove all inessential valence-two vertices that appeared,
but in the general case certain inessential valence-two vertices are useful:
one needs to introduce them so that \hyperlink{EG-i}{(EG-i)} is satisfied, for instance.
As it happens, our method for showing that \hyperlink{EG-ii}{(EG-ii)} is satisfied
may in general even introduce problematic valence-two vertices.
Instead, call a topological representative $f\colon\mathcal{G} \to \mathcal{G}$
\emph{bounded} if there are at most $L$
exponentially growing strata, and if, for each exponentially growing stratum $H_r$,
the associated Perron--Frobenius eigenvalue $\lambda_r$ 
is also the Perron--Frobenius eigenvalue
of a matrix with at most $L$ rows and columns.
As in the proof of \Cref{traintrackthm},
if $f\colon \mathcal{G} \to \mathcal{G}$ is bounded,
the set of $\pf(f')$ for $f'\colon \mathcal{G}' \to \mathcal{G}'$
a bounded representative of $\varphi$ satisfying $\pf(f') \le \pf(f)$ is finite,
so operations decreasing $\pf(f)$ will eventually reach a minimum 
among bounded representatives,
which we will denote $\pfmin$.
Notice as well that the property of being bounded is a property 
of the sequence of numbers $\pf(f)$.

\paragraph{Elementary Moves Revisited.}
In \cite[Lemmas 5.1--5.4]{BestvinaHandel}, Bestvina and Handel
revisit the four elementary moves \emph{subdivision, valence-one homotopy, valence-two homotopy}
and \emph{folding} to analyze their impact on $\pf(f)$.
All of these moves except valence-two homotopy produce a topological representative
$f'\colon \mathcal{G}' \to \mathcal{G}'$ such that the associated Perron--Frobenius eigenvalues satisfy
$\pf(f') \le \pf(f)$.

Let us discuss valence-two homotopy.
Following \cite[p. 35]{BestvinaHandel},
suppose $e_i \in H_i$ and $e_j \in H_j$ are the edges incident to a valence-two vertex $v$.
We assume $i \le j$.
If $i = j$ and $H_i$ is exponentially growing,
choose $i$ and $j$ so that the eigenvector coefficient of $e_i$ is greater than or equal to
that of $e_j$.
Here is the key point: in all cases we perform the valence-two homotopy across $e_i$.
Call such a valence-two homotopy \emph{performable} if after making these choices,
we have that the inclusion $\iota_{e_i}\colon \mathcal{G}_{e_i}\to \mathcal{G}_v$ is an isomorphism.

\begin{lem}
    Suppose we are in the situation of the first assumption.
    All valence-two homotopies are performable,
    perhaps after rearranging strata.
\end{lem}

\begin{proof}
    We continue to use the notation above.
    Suppose at first that $H_i$ is a zero stratum.
    Then the restriction of $f$ to $G_i$ is a homotopy equivalence of $G_i$ with $G_{i-1}$.
    In particular, since $v$ is a valence-one vertex of $G_i$,
    we must have that $\iota_{e_i}\colon \mathcal{G}_{e_i} \to \mathcal{G}_{v}$ is an isomorphism.
    Therefore a valence-two homotopy of $e_j$ across $e_i$ is performable.

    So assume that $H_i$ is irreducible.
    Recall the partial order on strata,
    where $H_i \le H_k$ if some edge in $H_k$ is eventually mapped over some edge in $H_i$
    (and hence any edge in $H_i$, since $H_i$ is irreducible).
    If $H_j$ is a zero stratum, we may after dividing it into two zero strata,
    assume that $H_j = \{e_j\}$.
    This done, if we have $H_i \le H_j$ in this \emph{partial} order,
    and we have that $v$ is an inessential valenece-two vertex
    but $\iota_{e_i} \colon \mathcal{G}_{e_i} \to \mathcal{G}_v$ is not an isomorphism,
    we have a contradiction:
    by assumption the edge $e_j$ is eventually mapped over the edge $e_i$,
    so there is an injective homomorphism $\mathcal{G}_v \cong \mathcal{G}_{e_j} \to \mathcal{G}_{e_i}$,
    which therefore must map $\iota_{e_i}(\mathcal{G}_{e_i})$ properly into itself,
    contradicting the first assumption.

    Finally if we do \emph{not} have $H_i \le H_j$ in this partial order,
    then we may freely move $H_j$ below $H_i$ when we complete the partial order to a total order,
    and thus may swap the roles of $i$ and $j$ if need be.
\end{proof}

The proof of \cite[Lemma 5.4]{BestvinaHandel} shows that
if $i < j$ and $H_i$ is exponentially growing,
then $\pf(f') < \pf(f)$.
In the case where $i = j$ and $H_i$ is exponentially growing,
it may happen that $\lambda_i$ is replaced by some number of eigenvalues $\lambda'$
that all satisfy $\lambda' \le \lambda_i$,
so it is possible that $\pf(f') > \pf(f)$.
Nonetheless, we have the following result.
Call an elementary move \emph{safe} 
if performing it on a topological representative $f\colon\mathcal{G} \to \mathcal{G}$
yields a new topological representative $f'\colon \mathcal{G}' \to \mathcal{G}'$ with $\pf(f') \le \pf(f)$.
Thus all elementary moves with the exception of valence-two homotopy are always safe.

\begin{lem}[\cite{BestvinaHandel} Lemma 5.5]
	\label{boundedlemma}
	If $f\colon\mathcal{G} \to \mathcal{G}$ is a bounded topological representative
	and $f'\colon \mathcal{G}' \to \mathcal{G}'$ is obtained from $f$
	by a sequence of safe moves with
	$\pf(f') < \pf(f)$,
	then there is a bounded topological representative $f''\colon\mathcal{G}'' \to \mathcal{G}''$
	with $\pf(f'') < \pf(f)$.
\end{lem}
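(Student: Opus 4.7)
The plan is to reduce the (possibly unbounded) $f'$ to a bounded representative $f''$ by eliminating the inessential valence-one and valence-two vertices that safe moves may have introduced, while using the strict inequality $\pf(f')<\pf(f)$ as a budget to absorb the controlled eigenvalue perturbations caused by valence-two homotopies. First, I would apply valence-one homotopies to every inessential valence-one vertex of $\mathcal{G}'$; by the graph-of-groups analogue of \cite[Lemma~1.11]{BestvinaHandel} each such move weakly decreases $\pf$ in the lexicographic order, so the strict inequality $\pf < \pf(f)$ is preserved.

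The heart of the argument is the treatment of inessential valence-two vertices, which is delicate because valence-two homotopy is not a safe move in general. I would process such vertices stratum by stratum, beginning from the top. For a vertex $v$ with incident edges $e_i,e_j$: (a)~if both edges lie in a zero stratum then the homotopy does not affect any exponentially-growing eigenvalue and is harmless; (b)~if both lie in the same exponentially-growing stratum $H_r$, then whenever we have the freedom to choose, we collapse the edge whose coordinate in the Perron--Frobenius eigenvector of $M_r$ is largest, which by the analogue of \cite[Lemma~1.12]{BestvinaHandel} does not increase $\lambda_r$; (c)~if the edges lie in distinct strata $H_r$ and $H_s$ with $r<s$, the collapse does not affect $\lambda_s$, so any change to $\pf$ occurs strictly below position $s$.

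The main obstacle is case (b) when the surjectivity of the edge homomorphisms forces the collapse in the disadvantageous direction, a difficulty already flagged in the remark following the definition of valence-two homotopy. Here one cannot collapse directly; instead, I would insert preparatory subdivisions and folds chosen so that the edge we are allowed to collapse also becomes the one with the larger eigenvector coordinate, mirroring the re-ordering trick in \cite[Section~5]{BestvinaHandel}. This is exactly where the strict inequality $\pf(f')<\pf(f)$ is needed: letting $\varepsilon>0$ be the gap between the two sequences at their first point of disagreement, we absorb any controlled increase at lower indices inside $\varepsilon$, and the preparatory moves themselves are safe.

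Once no inessential valence-one or valence-two vertex remains, the same Euler-characteristic count used in the proof of \Cref{traintrackthm} shows that $G''$ has at most $2\eta(\mathcal{G}'')+3\beta(\mathcal{G}'')-3$ edges, bounding both the number of exponentially-growing strata and the size of each transition submatrix. Thus $f''$ is bounded, and since every step was either safe or paid for from the budget $\varepsilon$, we retain $\pf(f'')<\pf(f)$, as required.
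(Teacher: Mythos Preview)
Your overall shape is right---eliminate inessential valence-one and valence-two vertices until the Euler-characteristic bound kicks in---but the two crucial steps are not justified, and the paper's route is both simpler and different in spirit.

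First, the ``preparatory subdivisions and folds chosen so that the edge we are allowed to collapse also becomes the one with the larger eigenvector coordinate'' is a genuine gap. There is no such re-ordering trick in \cite[Section~5]{BestvinaHandel}; you would need to produce these moves explicitly and show they are safe, and it is not at all clear this can be done. Second, your $\varepsilon$-budget argument misreads the lexicographic order. If the first disagreement between $\pf(f')$ and $\pf(f)$ is at position $j$, an increase at any index $i<j$ cannot be ``absorbed'' by the gap at position $j$: any increase at an earlier index immediately pushes $\pf$ past $\pf(f)$, regardless of how large $\varepsilon$ is. So the budget metaphor does not apply to lex order.

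The paper (following Bestvina--Handel) avoids both problems by not trying to keep $\pf$ below $\pf(f)$ throughout. Starting from $f'$, one performs all valence-one homotopies and all \emph{safe} valence-two homotopies; these do not increase $\pf$. One then performs the remaining (possibly unsafe) valence-two homotopies until no inessential valence-two vertices remain. The resulting $f''$ is bounded by the Euler-characteristic count, and one has $\pf(f') \le \pf(f'') < \pf(f)$: the point is that unsafe valence-two homotopy within an exponentially-growing stratum replaces $\lambda_r$ by several eigenvalues all $\le \lambda_r$, so although $\pf$ may increase from $f'$ to $f''$, the analysis in \cite[Lemmas~5.3--5.5]{BestvinaHandel} shows it cannot climb back up to $\pf(f)$. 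Your attempt to prevent any increase is both unnecessary and, as written, unsupported.
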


\begin{proof}
    The proof is essentially identical to \cite[Lemma 5.5]{BestvinaHandel}.
    Suppose first that our topological representatives satisfy the first assumption.
    By performing valence-one and safe valence-two homotopies,
    we may assume that $f'\colon \mathcal{G}' \to \mathcal{G}'$
    has the property that $\mathcal{G}'$ has no inessential valence-one vertices
    and that each inessential valence-two vertex $v$
    has the property that the two edges incident to $v$ belong to the same exponentially growing stratum.
    Thus $f'\colon \mathcal{G}' \to \mathcal{G}'$ has at most $L$ strata, exponentially growing or no,
    and $\pf(f')$ is obtained from $\pf(f)$ by replacing some of the Perron--Frobenius eigenvalues
    with strictly smaller eigenvalues $\lambda_i'$.
    For the eigenvalues that are not replaced, the fact that $f$ was bounded
    implies that these eigenvalues are the Perron--Frobenius eigenvalues for matrices with at most $L$
    rows and columns.
    Thus we only need to show that the $\lambda_i'$ are also the Perron--Frobenius eigenvalues for matrices
    with at most $L$ rows and columns.
    We do this by performing dangerous valence-two homotopies,
    replacing each $\lambda'_i$ with some collection of $\lambda''_{ij}$ satisfying
    $\lambda''_{ij} \le \lambda'_i$ until each resulting stratum has at most $L$ edges.
    We still have that $f\colon \mathcal{G}'' \to \mathcal{G}''$ has at most $L$ strata,
    so this topological representative $f''\colon \mathcal{G}'' \to \mathcal{G}''$ is bounded.

    In the situation of the second assumption, the argument is essentially the same.
    By \Cref{edgesbound}, there is a uniform bound to the number of problematic valence-two vertices,
    so we need only focus on the inessential valence-two vertices which are \emph{not} problematic.
    We then proceed exactly as above.
\end{proof}

\paragraph{Invariant Core Subdivision.}
We recall the construction of the \emph{invariant core subdivision}
of an exponentially growing stratum $H_r$.
Assume that a topological representative $f\colon \mathcal{G} \to \mathcal{G}$
linearly expands edges over edge paths with respect to some metric on $G$.
If $f(H_r)$ is not entirely contained in $H_r$, then the set
\[
	I_r \coloneqq \{x \in H_r \mid f^k(x) \in H_r \text{ for all } k > 0\}
\]
is an $f$-invariant Cantor set.
The \emph{invariant core} of an edge $e$ in $H_r$ is the smallest closed subinterval of $e$
containing the intersection of $I_r$ with the interior of $e$.
The endpoints of invariant cores of edges in $H_r$ form a finite set which $f$ sends into itself.
Declaring elements of this finite set to be vertices is called \emph{invariant core subdivision.}
The stratum $H_r$ determines a new exponentially growing stratum $H'_r$
whose edges are the invariant cores of edges in $H_r$.

The following lemma says that invariant core subdivision can be used to create topological representatives
whose exponentially growing strata satisfy \hyperlink{EG-i}{(EG-i)}.

\begin{lem}[\cite{BestvinaHandel} Lemma 5.13]
	\label{operation1}
	If $f'\colon\mathcal{G}' \to \mathcal{G}'$
	is obtained from $f\colon\mathcal{G} \to \mathcal{G}$
	by an invariant core subdivision of an exponentially growing stratum $H_r$,
	then $\pf(f') = \pf(f)$, and
	the map $Df'$ maps directions in the resulting exponentially growing stratum $H'_r$ to itself,
	so $H'_r$ satisfies \hyperlink{EG-i}{(EG-i)}.
	If $H_j$ is another exponentially growing stratum for $f\colon\mathcal{G} \to \mathcal{G}$
	that satisfies \hyperlink{EG-i}{(EG-i)} or \hyperlink{EG-ii}{(EG-ii)},
	then the resulting exponentially growing stratum $H'_j$
	for $f'\colon\mathcal{G}' \to \mathcal{G}'$ still satisfies those properties.
\end{lem}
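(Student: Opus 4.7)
The plan is to first verify that invariant core subdivision is a legitimate operation in the sense of the earlier subdivision lemma, then to update the filtration, and finally to check each of the three claims in turn. The first task is to see that the set $E_r$ of endpoints of invariant cores of edges in $H_r$ is $f$-invariant. Since $I_r = \bigcap_{k\ge 0} f^{-k}(H_r)$ is closed and $f$-invariant, and $f$ expands each edge linearly (hence monotonically), $f$ sends the infimum/supremum of $I_r \cap \operatorname{int}(e)$ to an extremal accumulation point of $I_r$ inside the edge containing its image, which is either a vertex of $\mathcal{G}$ or itself an element of $E_r$. Once the points of $E_r$ are declared vertices, each edge $e$ of $H_r$ is broken into a \emph{core piece} $e_c$ (possibly equal to $e$) and zero, one, or two \emph{outside pieces}. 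I update the filtration by placing the outside pieces into a new zero stratum $H^{0}$ immediately above $G_{r-1}$ and the core pieces into the new stratum $H'_r$ immediately above $H^0$; strata above index $r$ are unchanged as subgraphs, with their filtration indices shifted by one.

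Next I would establish $\pf(f') = \pf(f)$. The strata strictly above $H_r$ are untouched as subgraphs, so their transition submatrices — hence their Perron--Frobenius eigenvalues — are unchanged. The new stratum $H^0$ is a zero stratum and contributes nothing to $\pf(f')$. For $H'_r$, the core pieces are in bijection with the edges of $H_r$, so the two transition matrices have the same size; the key observation is that $f(e_c)$ crosses $e'_c$ exactly as often as $f(e)$ crosses $e'$, because every full crossing of $e'$ in $f(e)$ restricts to a full crossing of $e'_c$ in $f(e_c)$ and, conversely, $f$ cannot enter a core ``from the middle'' since it sends core endpoints to core endpoints. Thus the two transition matrices coincide and have the same Perron--Frobenius eigenvalue.

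For \hyperlink{EG-i}{(EG-i)} on $H'_r$, the heart of the matter is that $f$ preserves the ``into the core'' direction at each core endpoint. If $p$ is the left endpoint of the invariant core of $e$, then $p$ is a one-sided accumulation point of $I_r \cap \operatorname{int}(e)$ (from the right), and since $I_r$ is $f$-invariant and $f$ is monotone on $e$, the image $f(p)$ is likewise a one-sided accumulation point of $I_r$ in its image edge; hence the direction at $p$ pointing into $e_c$ maps under $Df$ to a direction pointing into a core piece $e'_c$ (or, if $f(p)$ is an original vertex, to the initial half-edge of a core piece based there). This shows that the first edge of $f(e_c)$ lies in $H'_r$, so $Df'$ carries turns in $H'_r$ at old vertices back to turns in $H'_r$. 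At a new valence-two vertex, the only turn lying in $H'_r$ is the degenerate pair formed by the single core half-edge, and this is trivially preserved. For any other exponentially-growing stratum $H_j$, the subdivision leaves the vertices and edges of $H_j$ unchanged; turns in $H_j$ and their $Df$-orbits are modified at most by the harmless relabelling of each edge $e\in H_r$ by its initial subdivided piece, which leaves initial directions — and therefore both degeneracy and legality — unchanged. A similar reduction shows \hyperlink{EG-ii}{(EG-ii)} on $H_j$ is preserved since paths in $G_{j-1}$ are represented by the same edge paths up to subdivision of interiors.

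The main obstacle I expect is the direction-preservation argument for \hyperlink{EG-i}{(EG-i)} on $H'_r$, since this is the one place where the specific definition of the invariant core (as the convex hull of $I_r$, rather than some larger $f$-invariant subset) is essential. Particular care is needed in the case where $f(p)$ is an original vertex, so that ``pointing into the invariant core at $f(p)$'' must be interpreted as the initial half-edge of some specific edge at that vertex, and one must verify it is indeed the initial half-edge of a core piece.
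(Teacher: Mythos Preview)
The paper does not prove this lemma itself; it simply cites \cite[Lemma~5.13]{BestvinaHandel} and appends the one-line remark that invariant core subdivision affects only edges of $H_r$ and adds new strata below $H_r$. Your proposal is essentially the Bestvina--Handel argument and is correct in substance.

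One small refinement: the outside pieces need not form a \emph{single} zero stratum. If $A=[q,p]$ is an outside piece (with $q$ an original vertex and $p$ a core endpoint), then $f(A)$ is the subpath of $f(e)$ from $f(q)$ to $f(p)$; since the interior of $f(A)$ avoids $I_r$, the only core endpoint it can meet is its terminus $f(p)$, and hence $f(A)$ may cross exactly the outside piece adjacent to $f(p)$ as its final edge. So the transition submatrix on outside pieces can have off-diagonal entries. It is, however, nilpotent: a directed cycle among outside pieces would produce a periodic point in an outside piece, hence a point of $I_r$ outside every invariant core, a contradiction. Thus after refining to a maximal filtration you obtain one or more zero strata below $H'_r$, not necessarily a single one. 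Incidentally, this analysis shows the new strata are genuinely zero strata in the paper's terminology, which is a bit sharper than the paper's remark that ``non-exponentially-growing strata'' are added.
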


In fact, invariant core subdivision affects only edges in $H_r$.
If new vertices are created, then one or more non-exponentially growing strata are added to the filtration below $H_r$.

\paragraph{Collapsing Inessential Connecting Paths.}
The following lemma says that an application of operations already defined
may be used to construct topological representatives whose exponentially growing strata satisfy
\hyperlink{EG-ii}{(EG-ii)}.

\begin{lem}[\cite{BestvinaHandel} Lemma 5.14]
	\label{operation2}
	Let $f \colon \mathcal{G} \to \mathcal{G}$ be
	a bounded topological representative with exponentially growing stratum $H_r$.
	If $\alpha \subset G_{r-1}$ is a path with endpoints in $H_r \cap G_{r-1}$
	such that $f_\sharp(\alpha)$ is trivial,
	we construct a new bounded topological representative
	$f' \colon \mathcal{G}' \to \mathcal{G}'$
	such that if $H'_r$ is the stratum of $\mathcal{G}'$ determined by $H_r$,
    then either (if the endpoints of $\alpha$ are distinct)
    $H_r'\cap G_{r-1}'$ has fewer points than $H_r\cap G_{r-1}$
    or (if the endpoints of $\alpha$ are equal)
    a vertex group of $H'_r \cap G'_{r-1}$ has increased.

    If $k > r$ and $H_k$ satisfies \hyperlink{EG-ii}{(EG-ii)},
    then $H'_k$, the stratum determined by $H_k$, satisfies \hyperlink{EG-ii}{(EG-ii)}.
    If $k \ge r$ and $H_k$ satisfies \hyperlink{EG-i}{(EG-i)},
    then $H'_k$ satisfies \hyperlink{EG-i}{(EG-i)}.
\end{lem}

\begin{proof}
    We follow the outline of the proof of \cite[Lemma 5.14]{BestvinaHandel}.
    Let $V$ be the vertex set of $G$.
    Subdivide at each point of $\alpha \cap f^{-1}(V)$,
    obtaining a topological representative $f_1 \colon \mathcal{G}(1) \to \mathcal{G}(1)$
    and an identifying homotopy equivalence $p_1\colon \mathcal{G} \to \mathcal{G}(1)$
    whose map of underlying graphs is a homeomorphism but not a cellular map.
    Define $\alpha_1 = p_1(\alpha)$, and write $\alpha_1 = g_0e_1\ldots e_kg_k$.
    There is a map of graphs of groups $h_1\colon \mathcal{G}(1) \to \mathcal{G}$ such that $p_1h_1 = f_1$.
    We may write
    \[  W_1 = h_1(\alpha_1) = (h_1)_{v_0}(g_0)g_{\bar e_1}h_1(e_1)g_{e_1}
    \ldots g_{\bar e_k}h_1(e_k)g_{e_k}^{-1}(h_1)_{v_k}(g_k). \]
    (Here each $h_1(e_i)$ should be understood as the edge determined by the map of underlying graphs.)
    Since $[W_1]$ is trivial,
    there is some backtracking,
    i.e.~there exists $\ell$ such that the edges $h_1(e_\ell)$ and $h_1(\bar e_{\ell+1})$ are equal
    and $g_{e_\ell}(h_1)_{v_\ell}(g_\ell)g_{\bar e_{\ell+1}}$
    belongs to $\iota_{e_\ell}(\mathcal{G}_{e_\ell})$.
    The same statement is true of $f_1$,
    so we may 
    (possibly after twisting the marking or changing the fundamental domain as in \Cref{irreducibleexample})
    fold $e_\ell$ and $\bar e_{\ell+1}$.
    Note that it is possible that $e_\ell = \bar e_{\ell+1}$,
    in which case the fold increases the edge group $\mathcal{G}_{e_\ell}$.
    We get a resulting homotopy equivalence $f_2\colon \mathcal{G}(2) \to \mathcal{G}(2)$
    and the resulting quotient map (which may be a homeomorphism of underlying graphs)
    $p_2\colon \mathcal{G}(1) \to \mathcal{G}(2)$.
    As before, there is a map of graphs of groups $h_2\colon \mathcal{G}(2) \to \mathcal{G}$
    such that now $p_2p_1h_2 = f_2$.
    If the edges $e$ and $e'$ were folded to create an edge $e''$,
    then (thinking of these edges as segments of edges of $\mathcal{G}$)
    we have $f(e) = f(e')$ as length-one edge paths---this is why we twisted the marking---and 
    we define $h_2(e'') = f(e) = f(e')$.
    Define $\alpha_2 = (p_2p_1)_\sharp(\alpha)$, 
    and define $W_2 = h_2(\alpha_2)$ as above.
    We have that $W_2$ is obtained from $W_1$ by canceling some backtracking,
    so $\alpha_2$ has fewer edges than $\alpha_1$.
    We have that $[W_2]$ is trivial, so we may repeat the above argument at most $k$ times
    to produce $f_k\colon \mathcal{G}(k) \to \mathcal{G}(k)$ such that 
    $\alpha_k = (p_kp_{k-1}\dotsb p_2p_1)_\sharp(\alpha)$ is the trivial path.
    Finally let $f'' \colon \mathcal{G}'' \to \mathcal{G}''$
    be the topological representative obtained from $f_k$ 
    by tightening and collapsing the maximal pretrivial forest.
    
    Since folding decreases $\pf(f)$ or leaves it the same, we have $\pf(f'') \le \pf(f)$.
    If $\pf(f'') = \pf(f)$, then $f''$ is bounded since $f$ was, so we let $f' = f''$.
    If not, then we apply \Cref{boundedlemma} to produce a bounded topological representative
    $f' \colon \mathcal{G}' \to \mathcal{G}'$ such that $\pf(f'') \le \pf(f') < \pf(f)$.


    The argument now finishes as in \cite[Lemma 5.14]{BestvinaHandel}.
    If the endpoints of $\alpha$ were distinct,
    then the exponentially growing stratum $H'_r$ determined by $H_r$
    satisfies \[|H'_r \cap G'_{r-1}| < |H_r \cap   G_{r-1}|.\]
    If the endpoints were \emph{not} distinct,
    the vertex group of $H'_r$ determined by the endpoint of $\alpha$
    is now larger than it was in $H_r$,
    in the sense that there is a natural injective but not surjective
    identifying homomorphism.

    As in \cite[Lemma 5.14]{BestvinaHandel}, if $k > r$ and $H_k$ satisfies \hyperlink{EG-ii}{(EG-ii)},
    then the corresponding stratum $H'_k$ of $f'\colon \mathcal{G}' \to \mathcal{G}'$ still satisfies \hyperlink{EG-ii}{(EG-ii)}.
    Likewise, if $k \ge r$ and $H_k$  satisfies \hyperlink{EG-i}{(EG-i)},
    then the corresponding stratum $H'_k$ satisfies \hyperlink{EG-i}{(EG-i)}.
\end{proof}

If $\mathcal{G}$ is a finite graph, 
has finitely generated edge groups,
and has no inessential valence-one vertices,
then the map $f\colon\mathcal{G} \to \mathcal{G}$
satisfies the assumptions of \cite[Theorem 2.1]{Dunwoody},
and thus can be written as a (finite) product of folds and
what Dunwoody calls ``vertex morphisms.''
In fact, the vertex morphisms are unnecessary,
because $f_\sharp$ is an isomorphism.
Since each of the folds performed in \Cref{operation2}
is a fold factor of $f$,
after performing finitely many such folds,
we must have that the exponentially growing stratum of interest $H_r$
satisfies \hyperlink{EG-ii}{(EG-ii)}.

\begin{lem}[\cite{FeighnHandelAlg} Lemma 2.4]
	\label{algorithmcheck}
    Assuming an oracle that can compute products of elements in vertex groups,
    can compute images of injective homomorphisms between edge groups
    and vertex groups of $\mathcal{G}$
    and can tell when two vertex group elements are equal,
	there is an algorithm that checks whether a topological representative
	$f\colon\mathcal{G} \to \mathcal{G}$ is a relative train track map.
\end{lem}

\begin{proof}
	Since \hyperlink{EG-i}{(EG-i)} is a finite property,
    (whether the image of a direction belongs to $H_r$ is a property of the underlying edge,
    and $H_r$ has finitely many edges)
	we may assume that each exponentially growing stratum satisfies
	\hyperlink{EG-i}{(EG-i)}.

	Suppose $H_r$ is an exponentially growing stratum.
	A \emph{connecting path for $H_r$} is a tight path $\alpha$ in $G_{r-1}$
	with endpoints in $H_r \cap G_{r-1}$.
    Since \hyperlink{EG-i}{(EG-i)} holds,
    vertices in $H_r \cap G_{r-1}$ are sent to vertices in $H_r \cap G_{r-1}$.
    For paths with distinct endpoints,
    we claim that for each component $C$ of $G_{r-1}$,
    \hyperlink{EG-ii}{(EG-ii)} for paths with distinct endpoints is equivalent to the condition
    that distinct vertices of $H_r \cap C$ are sent to distinct vertices of $H_r \cap G_{r-1}$.
    Indeed, if this holds, then tight paths with distinct endpoints
    are sent to tight paths with distinct endpoints which are thus homotopically nontrivial.
    If not, then there are a pair of distinct vertices $v$ and $w$
    in $H_r \cap C$ identified by $f$.
    In this case there is a connecting path $\alpha$ with endpoints $v$ and $w$
    whose $f_\sharp$-image is trivial
    (consider what a homotopy inverse does to $f(v') = f(w')$).

    Finally we consider connecting paths with the same endpoint.
    Let $v$ be a vertex in  $H_r \cap C$.
    If the map $f_v\colon \mathcal{G}_v \to \mathcal{G}_{f(v)}$ is an isomorphism 
    there is nothing to check.
    The map $f$ induces an isomorphism 
    $f_\sharp\colon \pi_1(\mathcal{G},v) \to \pi_1(\mathcal{G},f(v))$,
    so we may consider the subgroup $f_\sharp^{-1}(\mathcal{G}_{f(v)})$.
    It is elliptic, and in fact fixes a vertex of the Bass--Serre tree $\Gamma$
    (consider again what a homotopy inverse to $f$ does to the vertex $f(v)$).
    There is a tight path $\sigma$ such that each element of 
    $f_\sharp^{-1}(\mathcal{G}_{f(v)})$
    may be represented by a path of the form $\sigma g\bar\sigma$.
    (This path may not be tight, but may be tightened by a homotopy.)
    Each of these paths is inessential, 
    in the sense that their $f_\sharp$-image is trivial,
    and they are connecting paths for $H_r$ 
    if they are contained in $G_{r-1}$.
    Thus a necessary condition for \hyperlink{EG-ii}{(EG-ii)}
    is that for each such $g \in \mathcal{G}_{f(v)} \setminus f_v(\mathcal{G}_v)$,
    some and hence any tight path homotopic to $\sigma g\bar \sigma$ 
    is \emph{not} contained in $G_{r-1}$.
    In fact this condition is sufficient.
    This is a finite property, since for each vertex $v$ we need only consider the path $\sigma$.
    Therefore we may assume \hyperlink{EG-ii}{(EG-ii)} holds.

	Finally, \hyperlink{EG-iii}{(EG-iii)} for $H_r$ is equivalent to checking that
	$f(e)$ is $r$-legal for each edge $e\in H_r$.
    Since we assume $H_r$ satisfies \hyperlink{EG-i}{(EG-i)},
    in the situation of the first assumption,
    \Cref{coHopfianisomorphic} implies that any nondegenerate turn in $H_r$
    whose directions determine the same underlying oriented edge of $G$ is legal.
    Thus if a turn is illegal, 
    it becomes degenerate as soon as the underlying oriented edges of $G$
    are identified.
    This implies that checking \hyperlink{EG-iii}{(EG-iii)} is a finite property:
    for each of the finitely many turns in $H_r$ crossed by $f(e)$, we need only check
    that either the underlying edges of the turn are periodic,
    so the turn never degenerates,
    or the underlying edges are eventually identified,
    in which case we only need check whether the actual turn degenerates at that stage.
    In the case of the second assumption, there are only finitely many directions
    at a given vertex, so it is clear that the \hyperlink{EG-iii}{(EG-iii)} is a finite property.
\end{proof}

\begin{proof}[Proof of \Cref{relativetraintrackthm}]
    As in the proof of \Cref{traintrackthm},
    we begin with a topological representative $f\colon \mathcal{G} \to \mathcal{G}$
    on a graph of groups satisfying one of our standing assumptions.
    Assume further that $\mathcal{G}$ is reduced.
    By assumption, $f$ is bounded.
	Consider the highest exponentially growing stratum $H_r$ of $\mathcal{G}$.
	We check whether $H_r$ satisfies \hyperlink{EG-i}{(EG-i)} and \hyperlink{EG-ii}{(EG-ii)}
	using \Cref{algorithmcheck}.
	If not, apply \Cref{operation1} and \Cref{operation2} to create a new topological representative,
	still called $f\colon\mathcal{G} \to \mathcal{G}$ 
	such that the resulting exponentially growing stratum $H_r$ satisfies 
	\hyperlink{EG-i}{(EG-i)} and \hyperlink{EG-ii}{(EG-ii)}.
	Repeat with the next highest exponentially growing stratum until all exponentially growing strata
	satisfy these properties.
	Check whether the resulting topological representative, 
	which we still call $f\colon\mathcal{G} \to \mathcal{G}$,
	satisfies \hyperlink{EG-iii}{(EG-iii)}. If it does, we are done.

	If not, then there is some edge $e$ in an exponentially growing stratum $H_r$
	such that $f(e)$ is not $r$-legal.
	We apply the algorithm in the proof of \Cref{traintrackthm}:
	there is a point $P$ in $H_r$ where $f^k$ is not injective at $P$ for some $k > 1$.
	We subdivide and then repeatedly fold.
    As in the proof of \Cref{traintrackthm},
    no edge-group-increasing folds are necessary in this step
    in the case of the first assumption.
    In the contrary case, we have a bound on the number of edge-group-increasing folds.
	Either we have reduced the eigenvalue for $H_r$ or produced a valence-one vertex.
	We remove all valence-one vertices via homotopies
	and perform all possible valence-two homotopies which do not increase $\pf(f)$.
	At this point we have created a new topological representative
	$f'\colon\mathcal{G}' \to \mathcal{G}'$ with $\pf(f') < \pf(f)$,
	but $f'$ may not be bounded.
	Apply \Cref{boundedlemma} to produce a new bounded topological representative
	$f''\colon\mathcal{G}'' \to \mathcal{G}''$ with $\pf(f'') < \pf(f)$.
	If \hyperlink{EG-i}{(EG-i)} and \hyperlink{EG-ii}{(EG-ii)}
	are not satisfied by $f''$, we may restore these properties 
    by applying \Cref{operation1} and \Cref{operation2}.
    We saw that these lemmas preserve boundedness and do not increase $\pf(f'')$.
	Because $\pf(f)$ can only be decreased finitely many times before reaching $\pfmin$,
	eventually this process terminates, yielding a relative train track map.
\end{proof}

\begin{cor}
	\label{pfcorollary}
	If $f\colon\mathcal{G}\to \mathcal{G}$ is a topological representative
	satisfying \hyperlink{EG-i}{(EG-i)} and with $\pf(f) = \pfmin$,
    then $f$ is bounded and
	the exponentially growing strata of $f$ satisfy \hyperlink{EG-iii}{(EG-iii)}.
\end{cor}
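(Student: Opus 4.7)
The plan is to argue by contrapositive: if some exponentially-growing stratum $H_r$ of $f$ fails \hyperlink{EG-iii}{(EG-iii)}, I will produce a bounded representative $f''$ of the same outer automorphism with $\pf(f'') < \pf(f)$, contradicting $\pf(f) = \pfmin$. This essentially reproduces the nonterminating step of the iteration in the proof of \Cref{relativetraintrackthm}.

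Concretely, the failure of \hyperlink{EG-iii}{(EG-iii)} for $H_r$ means there is an edge $e \in H_r$ with $f(e)$ not $r$-legal. First I would apply \Cref{operation2} repeatedly to suppress connecting paths with trivial $f_\sharp$-image, ensuring \hyperlink{EG-ii}{(EG-ii)} for $H_r$; by the discussion following that lemma this neither increases $\pf$ nor destroys \hyperlink{EG-i}{(EG-i)}. Next I would run the subdivide-and-fold argument from the proof of \Cref{traintrackthm} inside $H_r$: using \hyperlink{EG-i}{(EG-i)}, iterating $Df$ on the illegal turn crossed by $f(e)$ produces an interior point $P$ of an edge of $H_r$ and an integer $k>1$ such that $f^k$ fails to be locally injective at $P$. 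Subdividing at $P$ together with the appropriate orbit points and then folding the resulting pair of edges at $P$ that share an $f$-image, either nontrivial tightening arises---and collapsing a maximal pretrivial forest followed by a maximal invariant forest strictly decreases $\lambda_r$---or $P$ becomes an inessential valence-one vertex, whose removal via a valence-one homotopy strictly decreases $\lambda_r$. After performing all inessential valence-one homotopies and every valence-two homotopy that does not increase $\pf$, I would finish by invoking \Cref{boundedlemma} to pass to a bounded representative $f''$.

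The main obstacle is verifying that $\pf(f'') < \pf(f)$ in the lexicographic order, which is delicate because the operations also affect strata below $H_r$. The essential point is that none of the moves above creates a new exponentially-growing stratum nor enlarges an existing Perron--Frobenius eigenvalue: the exponentially-growing strata above $H_r$ in the filtration are untouched, while folding, collapsing of invariant forests, and valence-one homotopy only shrink or merge existing strata at or below $H_r$. Thus the new multiset of exponentially-growing eigenvalues is obtained from the old by strictly decreasing $\lambda_r$ (or removing it) and possibly deleting some smaller eigenvalues absorbed into collapsed forests. Sorting both multisets nonincreasingly and comparing position by position, every slot above the original position of $\lambda_r$ is unchanged while the slot at $\lambda_r$'s position drops strictly, yielding $\pf(f'') < \pf(f)$ and the required contradiction with $\pf(f)=\pfmin$.
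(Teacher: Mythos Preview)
Your proposal is correct and follows essentially the same approach as the paper, which presents the corollary as an immediate consequence of the proof of \Cref{relativetraintrackthm}: the subdivide-and-fold step there together with \Cref{boundedlemma} produces a bounded representative with strictly smaller $\pf$ whenever \hyperlink{EG-iii}{(EG-iii)} fails under \hyperlink{EG-i}{(EG-i)}. Two minor remarks: the detour through \Cref{operation2} to restore \hyperlink{EG-ii}{(EG-ii)} is unnecessary, since \hyperlink{EG-i}{(EG-i)} alone keeps the iterated illegal turn---and hence the point $P$ and all folds---inside $H_r$; and your third-paragraph bookkeeping is slightly loose (strata above $H_r$ are not literally untouched, and lower strata can change in more ways than deletion), the clean justification being simply that every move used is \emph{safe} in the sense of the ``Elementary Moves Revisited'' paragraph, so no Perron--Frobenius eigenvalue increases while $\lambda_r$ strictly drops.
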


\bibliographystyle{alpha}
\bibliography{bib.bib}
\end{document}